\documentclass{llncs}
\usepackage[utf8]{inputenc}
\usepackage{enumerate}
\usepackage{graphicx}
\usepackage{epstopdf}
\usepackage{amsfonts}
\usepackage{amsmath}
\usepackage{amssymb}
\usepackage{sectsty}
\usepackage{hyperref}
\usepackage{url}
\usepackage{xspace}
\usepackage{color}
\usepackage{parskip}

\def\dist{\textrm{dist}}
\newcommand{\On}{\mathrm{On}}

\pagestyle{plain} 


\begin{document}

\title{
Maximum Wiener index of unicyclic graphs with given bipartition
}

\author{Jan Bok \inst{1}, Nikola Jedli\v{c}kov\'{a} \inst{2} \and Jana Maxov\'{a} \inst{3} }
\institute{
Computer Science Institute, Faculty of Mathematics and Physics, Charles University, Malostransk\'{e} n\'{a}m\v{e}st\'{i} 25, 11800, Prague, Czech Republic. Email: \email{bok@iuuk.mff.cuni.cz}
\and
Department of Applied Mathematics, Faculty of Mathematics and Physics, Charles University, Malostransk\'{e} n\'{a}m\v{e}st\'{i} 25, 11800, Prague, Czech Republic. Email: \email{jedlickova@kam.mff.cuni.cz}
\and
Department of Mathematics, Faculty of Chemical Engineering, University of Chemistry and Technology,
Technick\'{a} 5, 166 28, Prague, Czech Republic.
Email: \email{maxovaj@vscht.cz}
}

\maketitle

\begin{abstract}
The \emph{Wiener index} is a widely studied topological index of graphs. One of the
main problems in the area is to determine which graphs of given properties
attain the extremal values of Wiener index.

In this paper we resolve an open problem posed by Du in \emph{[Wiener indices of trees and monocyclic graphs with given bipartition.
International Journal of Quantum Chemistry, 112:1598–1605, 2012]}. To this end we determine the unicyclic bipartite graphs
with given size of parts having the maximum Wiener index. This completes the previous 
research in which the minimum case was solved.

\noindent{\bf 2010 Mathematics Subject Classification:}\enspace 05D99, 94C15
\vspace{0.5em}\\
\noindent{\bf Keywords:}\enspace Wiener index, topological indices, chemical graph theory, extremal graph theory, unicyclic graphs
\end{abstract}

\section{Introduction}

The \emph{Wiener index} (also \emph{Wiener number}) was introduced by Harry Wiener \cite{wiener} as path number in 1947 to study the boiling points of paraffins. The Wiener index $W(G)$ of
a connected graph $G=(V,E)$  is defined as
$$W(G) := \sum_{\{u,v\} \subseteq V}\dist_G(u,v).$$
In other words, it is the sum of the lengths of the shortest paths
between all unordered pairs of vertices in the graph.

The Wiener index can be defined also in terms of \emph{transmission}.

\begin{definition}
Let $G$ be a graph. The \emph{transmission} of $v \in V(G)$ in $G$ (we denote it by $t_G(v)$) is the sum
of distances between $v$ and all other vertices of $G$, i.e.
$$t_G(v) =  \sum_{u \in V(G)} \dist_G(u,v).$$
\end{definition}

We can now write $W(G)$ alternatively as $W(G) := \frac12 \sum_{v \in
V(G)}t_G(v)$.

Wiener index is one of the most studied topological indices in mathematical
chemistry and it remains a very active research topic.
As a purely mathematical concept, Wiener index was studied under different
names --- gross status \cite{harary1959status}, the transmission \cite{vsoltes1991transmission} and the distance of graphs \cite{entringer1976distance}. Practical applications of the invariant are wide and apart from chemistry, there are applications in biology, communication theory, facility location,
cryptology, just to name a few. We refer the reader to the numerous surveys, e.g. \cite{dobrynin2001wiener,knor2014wiener,Knor2016,Xu2014461,bonchev2002wiener}.

Extremal graph theory is a branch of mathematics concerned with finding the extremal
graphs satisfying certain given set of properties (for more, see e.g.
\cite{bollobas2004extremal}). Many problems regarding Wiener index fall into
this category of problems. By our opinion, this is nicely illustrated by the
survey of Furtula et al.\ \cite{Xu2014461}. Also, as is seen in
\cite{Knor2016}, there are many long-standing open problems in the area.

We recall that a graph is \emph{unicyclic} (or monocyclic) if it is connected
and contains exactly one cycle. There is a rich literature on extremal problems
regarding Wiener index on unicyclic graphs, e.g.\ \cite{dong2012maximum,hou2012maximum,liu2011wiener}. Section 4 of the aforementioned
paper of Furtula et al.\ \cite{Xu2014461} is devoted to unicyclic graphs.

Furthermore, we say that a graph on $p+q$ vertices has a \emph{$(p,q)$-partition} if it is bipartite
and the parts of the bipartition are of size $p$ and $q$.

Regarding the bipartite unicyclic graphs with given bipartition $p,q$ and $p \le q$, Du
\cite{Du2012} showed that the minimum Wiener index is attained by the graph
which is obtained by connecting $p-2$ vertices to one vertex of a $4$-cycle,
and connecting $q-2$ vertices to its neighbor on the $4$-cycle. Moreover, if
$p = q = 3$, then $C_6$ is also an extremal graph.

But the maximum case was left open.

\begin{problem} \cite{Du2012}
Find the graph(s) with the maximum Wiener index among unicyclic graphs on $n$
vertices with bipartition sizes $p$ and $q$, where $n=p+q$.
\end{problem}

Our main result is the final resolution of this problem. We first need to define \emph{onion graphs}.

\begin{definition}
The \emph{onion graph} $\On(k,l,m)$ is formed by a cycle of size four with
antipodal vertices $u,v$, a set of $k$ pendant edges attached to the vertex $v$,
path $P_l$ with one of the endpoints identified with the vertex $u$ and
with $m$ pendant edges attached to the second endpoint of $P_l$. (For $l=1$,
the endpoints coincide.)
\end{definition}

\begin{figure}[h]
    \centering
    \includegraphics[angle = 270, scale = 0.8]{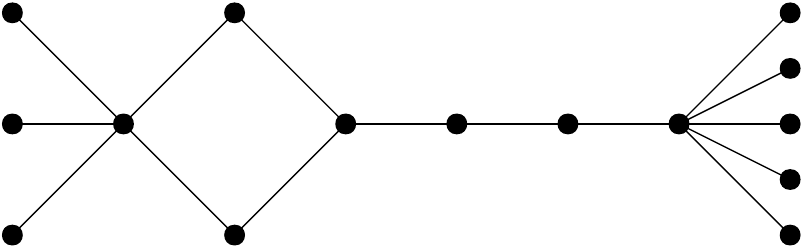}
    \caption{An example of an onion graph -- $\On(3,4,5)$.}
    \label{fig:priklad}
\end{figure}

With this definition in hand we can now state our main theorem.

\begin{theorem} \label{thm:main}
    The maximum Wiener index for $n$-vertex bipartite unicyclic graphs
    with given size of partitions $p,q$ ($1 < p \le q$) is attained by
    \emph{exactly one} graph:
    $$\On(\lfloor (q-p)/2 \rfloor, 2p-3, \lceil (q-p)/2 \rceil).$$
    Its Wiener index is equal to
\begin{align*}
&(2p-5)\cdot\lceil(q-p)/2\rceil\lfloor(q-p)/2\rfloor + (p-7)\lceil(q-p)/2\rceil + (13-7p)\lfloor(q-p)/2\rfloor + \\
&2p^2q + (q-p)^2 + 2p^3 - 37p + 66.
\end{align*}
\end{theorem}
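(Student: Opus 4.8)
The plan is to fix a maximiser $G$ among bipartite unicyclic graphs on $n$ vertices with parts of sizes $p\le q$ and to pin down its shape through a chain of reductions, each realised by an explicit local transformation which keeps the graph connected, unicyclic, bipartite and preserves both part sizes, while \emph{strictly} increasing the Wiener index; since $G$ is a maximiser it admits none of these, so the reductions single out one graph. Write the cycle of $G$ as $C=c_1\cdots c_{2g}$, let $T_i$ be the component of $c_i$ after deleting the cycle edges, of order $n_i\ge 1$. Two identities handle the bookkeeping: the splitting formula
$$W(G)=\sum_i W(T_i)+\sum_{i<j}\ \sum_{x\in T_i,\,y\in T_j}\bigl(\dist_{T_i}(x,c_i)+\dist_C(c_i,c_j)+\dist_{T_j}(c_j,y)\bigr),$$
and the edge-removal identity $W(G)=W(G-e)-2\sum_{i<j}n_in_j\max\{0,(j-i)-g\}$, valid when $e=c_{2g}c_1$ and tree distances are read along the path $c_1\cdots c_{2g}$.

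First I would force $g=2$. Assuming $g\ge 3$, I would take a cycle vertex of minimum weight, delete a cycle edge at it, and re-glue the detached end so as to shrink $C$ to a $4$-cycle while lengthening a pendant path; via the splitting formula one checks that the vertex pairs losing distance are outweighed (with multiplicity) by those gaining, so $W$ strictly increases, and iterates down to $g=2$. This is the step I expect to be the main obstacle: the net change has to be shown positive uniformly in the tree masses $n_i$, and the rerouting must move an \emph{even}-length arc of the cycle (or a parity-balanced pair of arcs) so as not to disturb the colour classes. We are then left with a $4$-cycle $c_1c_2c_3c_4$, say with $c_1,c_3$ in one part.

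Next I would bring $G$ into onion form. Using bipartition-respecting moves — relocating a pendant subtree to hang from a same-colour vertex of another pendant tree, or grafting a subpath to lengthen an existing pendant path, both of which strictly increase $W$ — one shows that $c_2$ and $c_4$ may be taken bare, that each of the two remaining pendant trees is a path issuing from its cycle vertex terminated by a bundle of pendant edges, and, using that the smaller part has only $p$ vertices together with a comparison showing one long pendant path beats two shorter ones of equal total length even when issued from antipodal cycle vertices, that all of the ``path length'' sits on one side; hence $G=\On(k,l,m)$. This consolidation is the second place where care is needed, since several of these moves are valid only under parity conditions on the pieces being moved.

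Finally I would fix the parameters and compute. Counting colour classes shows the free end of $P_l$ lies in the part of $u$ — the ``$l$ odd'' case, giving $l=2p-3$ and $k+m=q-p$ — and the complementary ``$l$ even'' family is eliminated by direct comparison. With $l=2p-3$ and $m=q-p-k$, applying the edge-removal identity to the cycle edge joining $u$ to a neighbouring bare vertex gives $W(\On(k,l,m))=W(D_k)-2(l+m)$, where $D_k$ is a double broom on a path of $2p-1$ vertices carrying $k+1$ and $m$ pendant edges at its two ends; since $W(D_k)$ is a constant plus $(2p-2)(k+1)(q-p-k)$, the map $k\mapsto W(\On(k,2p-3,q-p-k))$ is a downward parabola with vertex at $(q-p-1)/2+\tfrac1{2(p-1)}$, so its unique integer maximiser is $k=\lfloor(q-p)/2\rfloor$, $m=\lceil(q-p)/2\rceil$ (for $p=2$, where $l=1$, the two rounding choices name the same graph under the $C_4$-automorphism swapping $u$ and $v$). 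Substituting these values into the splitting formula and simplifying yields the stated closed form, and uniqueness follows because every other graph in the class was shown to admit a strictly $W$-increasing transformation.
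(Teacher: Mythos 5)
Your overall architecture --- a chain of strictly $W$-increasing, bipartition-preserving local moves forcing the maximiser into the onion family, followed by an optimisation over the onion parameters --- matches the paper's, and your endgame is both correct and genuinely different. The paper pins down $k$ and $m$ by comparing the transmissions of the two attachment vertices and invoking Du's lemma; you instead open the cycle at an edge incident to $u$, relate $\On(k,2p-3,m)$ to a double broom $D_k$ on a path of $2p-1$ vertices via $W(\On(k,2p-3,m)) = W(D_k) - 2(2p-3+m)$, and read off the unique integer maximiser of a downward parabola in $k$ with vertex at $(q-p-1)/2 + 1/(2(p-1))$. I checked this: the quadratic coefficient $(2p-2)(k+1)(q-p-k)$ and the linear correction $+2k$ are right, the vertex location is right, and your treatment of $p=2$ (where the two rounding choices name the same graph under the $C_4$-automorphism swapping $u$ and $v$) is exactly the degenerate case the paper also has to except via $\On(a,1,b)\cong\On(b,1,a)$. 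This is arguably cleaner than the paper's two-case transmission comparison and gives uniqueness within the onion family transparently.

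The gap is in the structural reduction, which is where essentially all of the paper's work lies (seven lemmas). You assert that relocating a pendant subtree to a same-colour vertex of another pendant tree, or grafting a subpath onto an existing pendant path, ``strictly increase[s] $W$'', but this is false as a blanket statement: such a relocation increases $W$ precisely when the target vertex has strictly larger transmission in the residual graph than the source (Du's lemma, the engine of the whole paper), and the actual content of the argument is exhibiting, in every non-onion configuration, a same-colour target of strictly larger transmission. That demands a separate case analysis for each feature --- emptying the two non-antipodal cycle vertices, showing each attached tree is a broom, putting all pendants in $Q$, and collapsing one broom to height one --- and several of the resulting inequalities are tight (differences like $2|T_4|-2|T_x|+2$, or $6a-16+2a(|V(H_1)|-1)$ in the height-reduction step, which is moreover a parity-sensitive re-wiring rather than a subtree relocation). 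Likewise your cycle-shrinking step requires reattaching the detached arc so that the heaviest tree moves away from at least two others, and the gain must be checked against the pairs that get closer; you correctly flag this as the main obstacle but do not resolve it. As it stands the proposal is a sound plan with a fully worked and independently interesting final third, and an unproven first two thirds.
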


\section{Lemmata}

We will prove the main theorem with the help of the following lemmata. Each
one yields some property of the extremal graphs. Finally, we will collect these
lemmata and conclude that there is a unique graph with the maximum Wiener index for given parameters. 

Throughout this paper, all graphs we consider are simple and connected.
For a graph $G=(V,E)$, we write $|G|$ for $|V(G)|$. We also shorten $\{u,v\}$
for an edge to $uv$. The operation of vertex and edge insertion is written as $G \cup v$ and $G \cup uv$, respectively.
For vertex and edge deletion, we write analogously $G - v$ and $G - uv$, respectively.

We denote by $E_{p,q}$ (for $1 < p \leq q$ and $p,q \in \mathbb N$) the set of
extremal graphs having the maximum possible Wiener index among all bipartite unicyclic
graphs on $p+q$ vertices with $(p,q)$-partition. With this
notation in hand, we can rephrase our aim as a search for characterizing the
set $E_{p,q}$ for every possible parameters $p$ and $q$.

We often use $P$ and $Q$ for the set of vertices in the part of size $p$ and $q$,
respectively.

Consider some unicyclic graph $U$ having a unique cycle $C$. We say that $T$ is
a \emph{rooted tree} if it is a tree component of graph $U'$ rooted in some
vertex $v \in V(C)$, where $V(U') = V(U)$ and $E(U')=E(U)-E(C)$.

\begin{lemma}
Every extremal graph $G \in E_{p,q}$ contains $4$-cycle.
\end{lemma}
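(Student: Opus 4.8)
The strategy is a standard contradiction-plus-surgery argument: assume $G \in E_{p,q}$ has its unique cycle $C$ of length $g \ge 6$ (it must be even since $G$ is bipartite, and $g=4$ is exactly what we want to rule out being violated), and exhibit another bipartite unicyclic graph $G'$ on the same vertex set, respecting the same $(p,q)$-partition, with $W(G') > W(G)$, contradicting extremality. The natural transformation is to ``open up'' the cycle: pick a suitable edge $e = xy$ of $C$, delete it, and reattach the resulting pendant structure elsewhere so as to create a $4$-cycle while strictly increasing total transmission. A cleaner variant that avoids bipartition bookkeeping is to contract part of the cycle: replace the long cycle $C$ of length $g$ by a $4$-cycle, moving the $g-4$ freed vertices into a path hung off one of the cycle vertices. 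One checks this keeps the graph bipartite with the same part sizes (a path of the right parity preserves the bipartition classes), keeps it unicyclic, and increases $W$.

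The key computational step is the comparison $W(G') - W(G) > 0$. Here I would use the transmission formulation $W(G) = \tfrac12\sum_v t_G(v)$ and the well-known fact that ``stretching'' a graph — roughly, replacing a cycle by a path attached at a vertex — can only increase pairwise distances, strictly for many pairs once $g \ge 6$. Concretely, the intuition is that on a $6$-cycle the antipodal distance is $3$, whereas rerouting those vertices onto a pendant path attached to a $4$-cycle produces strictly larger distances to the rest of the graph, because a path is the ``most spread-out'' tree. One makes this precise by splitting the vertex set into the part lying on $C$ and the parts lying in the rooted trees hanging off $C$, and checking that every relevant distance weakly increases and at least one strictly increases. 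It helps to first handle the bare cycle case ($G = C_g$, which only occurs when $p = q = g/2$) directly — here $W(C_g)$ versus $W(\On(0, g-2, 0))$ is an elementary finite computation — and then argue that attaching rooted trees only amplifies the gain, since the pendant subtrees see every cycle vertex and thus inherit the distance increase.

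The main obstacle I anticipate is making the surgery respect the prescribed bipartition $(p,q)$ exactly, not just ``some bipartition of the same total size.'' When we excise $g-4$ vertices from an even cycle and string them into a pendant path, we must attach that path at the correct cycle vertex so that the two color classes end up with sizes $p$ and $q$ again; since $g-4$ is even, the freed vertices split evenly between the classes, and a path attached at a vertex of either color redistributes them correctly — but this needs a careful parity check, and one must also ensure the new graph is still \emph{simple} and \emph{connected} (no accidental multi-edges when reattaching trees). A secondary, more technical obstacle is that $G$ may have complicated rooted trees hanging off $C$; the distance-increase argument must be uniform over all such configurations, which is why I would phrase the estimate in terms of transmissions $t_G(v)$ for $v$ ranging over the cycle and over the tree vertices separately, bounding each contribution's change from below by the corresponding change in the bare-cycle model and invoking that the off-cycle trees are unaffected internally by the surgery on $C$.
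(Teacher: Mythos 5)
Your overall surgery is the same as the paper's: the authors delete the edge $v_1v_k$ and add $v_1v_4$, turning the long even cycle into a $4$-cycle with the path $v_4v_5\cdots v_k$ pendant at $v_4$; since $v_4$ and $v_k$ lie in the same colour class and no vertex is moved, the $(p,q)$-partition is automatically preserved, which disposes of the parity bookkeeping you worry about. However, there is a genuine gap in your key step. You assert that the transformation ``can only increase pairwise distances, strictly for many pairs,'' and that one just checks ``every relevant distance weakly increases.'' That is false: some distances strictly \emph{decrease}. For instance, in the bare $6$-cycle, $\dist_G(v_1,v_4)=3$ but after the surgery $\dist_{G'}(v_1,v_4)=1$; more generally, every pair with one vertex in $T_1$ and the other in $T_i$ for $i\in\{4,\dots,\tfrac k2+1\}$ loses exactly $2$. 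So the proof cannot proceed by monotonicity; it must show that the gains outweigh the losses.

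This is also why your plan of ``first do the bare cycle, then argue that attaching trees only amplifies the gain'' does not go through as stated: whether the trees amplify or erode the gain depends on \emph{where} they sit. The paper's proof makes an essential normalisation you are missing: it labels the cycle so that the tree $T_k$ at the vertex that ends up farthest along the new pendant path is a \emph{largest} rooted tree. Then the gain $(k-4)\,|T_1|\,|T_k|$ from pairs in $T_1\times T_k$ exactly dominates the total loss $2\sum_{i\in I}|T_1|\,|T_i|\le (k-4)\,|T_1|\,|T_k|$ over the $\tfrac{k-4}{2}$ tree-pairs whose distance drops by $2$, and the leftover gain $(k-4)\,|T_2|\,|T_k|>0$ closes the argument. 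Without identifying (i) that distances do decrease for a quantifiable set of pairs and (ii) the correct choice of attachment point via the maximal tree, your sketch does not yield $W(G')>W(G)$.
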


\begin{proof}
Suppose for a contradiction that there exists a $G \in E_{p,q}$ without a
$4$-cycle. The graph $G$ clearly contains a $k$-cycle (with $k$ even and bigger than
$4$) as a subgraph. Denote consecutively $v_1,\ldots,v_k$ the vertices of the
cycle. Furthermore, denote by $T_i$ the tree rooted in $v_i$. 
Note that for all $i=1,\ldots,k$,
the number of vertices in $T_i$ is at least one as $v_i \in T_i$. Without loss of
generality we may suppose that $|T_k| \geq |T_i|$ for every $i=1,\ldots, k-1$.

We define a new graph $G'$ by setting $V(G'):=V(G)$ and $E(G'):=(E(G) \cup v_1
v_4) - v_1 v_k$. The subgraph of $G'$ induced by the vertices $v_1, \dots v_k$
is formed by a $4$-cycle and a path of length $k-4$ attached to $v_4$. Since
$v_k$ and $v_4$ belong to the same part of $G$, both $G$ and $G'$ have the
same $(p,q)$-partition.

For every $a \in T_1 \cup T_2$ and every $b \in T_k$, it holds that $$\dist_{G'}(a,b) =
\dist_{G}(a,b)+k-4.$$ Note that for $k>6$, there can be some other pairs of vertices such
that $\dist_{G'}(a,b) > \dist_{G}(a,b)$ but to prove this lemma
it is sufficient to consider just the aforementioned pairs.

Let $x$ and $y$ be arbitrary vertices in $G$ such that
$\dist_{G'}(x,y) < \dist_{G}(x,y)$. Then by the construction of $G'$, one of the vertices $x$
and $y$ belongs to $T_1$. Without loss of generality we may suppose that $x
\in T_1$.

Let us define $I: = \{4,\dots ,\frac k2 +1\}$. It is clear that for any
$x \in T_1$ and any $y \in V(G')$, it holds that $\dist_{G'}(x,y) < \dist_{G}(x,y)$ if
and only if $y \in T_i$ with $i \in I$. Moreover, if $x \in T_1$ and  $y \in
T_i, i \in I$, then  $$\dist_{G'}(x,y) = \dist_{G}(x,y) -2.$$

Our goal is to prove that $W(G')>W(G)$. From the previous observations we get
$$W(G')-W(G) \geq (k-4) \,|T_1| \cdot |T_k|  + (k-4) \, |T_2|\cdot |T_k| -2
\sum_{i \in I} |T_1|\cdot |T_i|.$$  From the assumption $|T_k| \geq |T_i|$ for
every $i=1, \dots , k$ and  $|I|= \frac k2 -2 = \frac {k-4}2$, it follows that
\begin{align*}
W(G')-W(G)&\geq (k-4)\,|T_1|\cdot |T_k|  + (k-4)\,|T_2|\cdot |T_k| -2  \left( \frac{k-4}2 \right)  |T_1|\cdot |T_k| \\
&=(k-4) \, |T_2|\cdot |T_k|.
\end{align*}

As $|T_i| \geq 1$ for all $i$ and $k \geq 6$, we get $W(G')-W(G) >0$, a
contradiction with the extremality of $G$.
\end{proof}

In the next proofs we use the following lemmata from the paper of Du \cite{Du2012} and from the paper of Polansky \cite{polansky1986}, respectively.

\begin{lemma} \cite{Du2012} \label{lem:du}
Let $G,H$ be two nontrivial connected graphs with $u,v \in V(G)$ and $w \in V(H)$. Let $GuH$ ($GvH$, respectively) be the graph obtained from $G$ and $H$ by identifying $u$ ($v$, respectively) with $w$. If $t_G(u) < t_G(v)$, then $W(GuH) < W(GvH)$.
\end{lemma}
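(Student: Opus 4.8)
The plan is to derive a closed formula for $W(GuH)$ that isolates the role of the identified vertex. Write $n_G=|G|$ and $n_H=|H|$, and view $V(GuH)$ as $V(G)\cup V(H)$ with the single vertex $u=w$ shared. Every unordered pair of vertices of $GuH$ falls into exactly one of three classes: both endpoints in $V(G)$; both endpoints in $V(H)$; or one endpoint $a\in V(G)\setminus\{u\}$ and the other $b\in V(H)\setminus\{w\}$. For the first two classes the distance is unchanged from $G$, resp.\ $H$: any detour through the other side would pass through the cut vertex $u=w$ twice, so by the triangle inequality it cannot be shorter. For a crossing pair, every path from $a$ to $b$ must use the cut vertex, hence $\dist_{GuH}(a,b)=\dist_G(a,u)+\dist_H(w,b)$.

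Summing over all pairs — and noting that every pair involving $u=w$ lies in the first or second class, so there is no double counting — gives
\begin{align*}
W(GuH) = W(G) + W(H) + \sum_{a\in V(G)\setminus\{u\}}\ \sum_{b\in V(H)\setminus\{w\}}\bigl(\dist_G(a,u)+\dist_H(w,b)\bigr).
\end{align*}
The double sum splits into $(n_H-1)\,t_G(u)$ from the first summand and $(n_G-1)\,t_H(w)$ from the second, so
\begin{align*}
W(GuH) = W(G) + W(H) + (n_H-1)\,t_G(u) + (n_G-1)\,t_H(w).
\end{align*}
The same identity holds for $GvH$ with $t_G(u)$ replaced by $t_G(v)$; the term $t_H(w)$ is unchanged because the same vertex $w$ of $H$ is identified in both constructions, and $W(G), W(H), n_G, n_H$ are likewise unchanged.

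Subtracting the two formulas yields $W(GvH)-W(GuH)=(n_H-1)\bigl(t_G(v)-t_G(u)\bigr)$. Since $H$ is nontrivial, $n_H\ge 2$, so $n_H-1\ge 1$; together with the hypothesis $t_G(v)-t_G(u)>0$ this gives $W(GuH)<W(GvH)$, as claimed. The only delicate point is the case analysis itself: one must justify that distances internal to $G$ (resp.\ $H$) survive the gluing and that every crossing distance genuinely factors through $u=w$ — both follow from $u=w$ being a cut vertex of $GuH$ separating $V(G)\setminus\{u\}$ from $V(H)\setminus\{w\}$ — and one must check the pair bookkeeping so that no pair is omitted or counted twice.
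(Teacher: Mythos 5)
Your proof is correct. The paper does not prove this lemma itself---it imports it from Du's paper---so there is no in-text argument to compare against; but the decomposition identity you derive on the way, namely $W(GuH)=W(G)+W(H)+(n_H-1)\,t_G(u)+(n_G-1)\,t_H(w)$, is precisely the statement the paper records separately as Lemma~\ref{lem:polansky} (Polansky's identity), and your observation that Du's lemma follows by subtracting the two instances of that identity (the terms $W(G)$, $W(H)$ and $(n_G-1)t_H(w)$ cancel, leaving $(n_H-1)(t_G(v)-t_G(u))>0$ since $n_H\ge 2$) is exactly the standard argument. Your justification of the identity is also sound: the identified vertex is a cut vertex separating $V(G)\setminus\{u\}$ from $V(H)\setminus\{w\}$, so intra-$G$ and intra-$H$ distances are preserved and crossing distances factor through it, and your pair bookkeeping (pairs containing the glued vertex fall into the first two classes, crossing pairs exclude it) avoids double counting. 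No gaps.
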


\begin{lemma} \cite{polansky1986} \label{lem:polansky}
Let $G_u$ and $G_v$ be two graphs with $n_u$ and $n_v$ vertices, respectively, and let $u \in V(G_u)$ and $v \in V(G_v)$. If $G$ arises from $G_u$ and $G_v$ by identifying $u$ and $v$, then
$$W(G) = W(G_u) + W(G_v) + (n_u - 1)t_{G_v}(v) + (n_v - 1)t_{G_u}(u). $$
\end{lemma}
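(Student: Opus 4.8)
The plan is to compute $W(G)$ directly from its definition as a sum of pairwise distances, exploiting the fact that the identified vertex is a cut vertex of $G$. Write $w$ for the vertex of $G$ obtained by identifying $u$ with $v$, and partition the vertex set as $V(G) = A \sqcup B \sqcup \{w\}$, where $A := V(G_u) \setminus \{u\}$ and $B := V(G_v) \setminus \{v\}$; thus $|A| = n_u - 1$ and $|B| = n_v - 1$. The key structural observation is that every path in $G$ between a vertex of $A$ and a vertex of $B$ must pass through $w$, so that distances internal to each side are inherited unchanged from $G_u$ and $G_v$ (with $w$ playing the role of $u$ in $G_u$ and of $v$ in $G_v$), while cross-distances split additively: for $a \in A$ and $b \in B$ one has $\dist_G(a,b) = \dist_{G_u}(a,u) + \dist_{G_v}(v,b)$.

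With this in hand, I would split the unordered pairs of $V(G)$ into three groups: those lying entirely inside $V(G_u) = A \cup \{w\}$, those lying entirely inside $V(G_v) = B \cup \{w\}$, and the cross pairs with one endpoint in $A$ and the other in $B$. Because distances inside $G_u$ are preserved, the first group sums to exactly $W(G_u)$, and symmetrically the second group sums to $W(G_v)$. The pairs incident to $w$ are correctly absorbed into these two terms without double-counting: a pair belonging to both groups would have both endpoints in $(A\cup\{w\})\cap(B\cup\{w\}) = \{w\}$, which is impossible for a two-element set, and one checks that every pair falls into exactly one of the three groups.

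It remains to evaluate the cross-pair contribution. Substituting the additive formula, the cross sum $\sum_{a \in A}\sum_{b \in B}\bigl[\dist_{G_u}(a,u)+\dist_{G_v}(v,b)\bigr]$ separates into $|B|\sum_{a\in A}\dist_{G_u}(a,u) + |A|\sum_{b\in B}\dist_{G_v}(v,b)$. Since the distance from $u$ to itself vanishes, $\sum_{a\in A}\dist_{G_u}(a,u) = t_{G_u}(u)$, and likewise $\sum_{b\in B}\dist_{G_v}(v,b) = t_{G_v}(v)$; inserting $|A| = n_u - 1$ and $|B| = n_v - 1$ yields the two mixed terms $(n_v-1)\,t_{G_u}(u) + (n_u-1)\,t_{G_v}(v)$. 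Adding the three contributions gives the claimed identity.

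The argument is in essence a careful bookkeeping of pairs, so the only substantive step is the cut-vertex claim that cross-distances split as $\dist_G(a,b) = \dist_{G_u}(a,u) + \dist_{G_v}(v,b)$. I expect this to be the main point requiring explicit justification: since $w$ is the unique vertex shared by the two sides, any $a$--$b$ walk must traverse $w$, giving the lower bound $\dist_G(a,b) \ge \dist_{G_u}(a,u) + \dist_{G_v}(v,b)$, while concatenating a shortest $a$--$u$ path in $G_u$ with a shortest $v$--$b$ path in $G_v$ realizes a walk of exactly this length, giving the matching upper bound. Everything else reduces to routine summation and reindexing.
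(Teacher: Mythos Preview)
Your argument is correct: the cut-vertex decomposition of pairs into the two ``internal'' groups and the cross group, together with the additive splitting $\dist_G(a,b)=\dist_{G_u}(a,u)+\dist_{G_v}(v,b)$ for cross pairs, yields exactly the stated identity, and your handling of the pairs incident to $w$ avoids any double-counting. Note that the paper does not actually prove this lemma; it is quoted from \cite{polansky1986} and used as a black box, so there is no in-paper proof to compare against.
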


\begin{lemma} \label{lem:jeden_nenaveseny}
Every extremal graph $G \in E_{p,q}$ has at least one vertex of degree two on its cycle.
\end{lemma}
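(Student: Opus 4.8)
The plan is to argue by contradiction: suppose $G \in E_{p,q}$ has a $4$-cycle $C$ (by Lemma~3) on vertices $v_1, v_2, v_3, v_4$ in cyclic order, and suppose that \emph{every} cycle vertex has degree at least three, i.e.\ each rooted tree $T_i$ hanging off $v_i$ has $|T_i| \ge 2$. The idea is to take one of these nontrivial trees, say the one rooted at a vertex $v_i$ that minimizes the transmission contribution, detach it from $v_i$ and reattach it to an adjacent cycle vertex, and show this strictly increases the Wiener index while preserving the $(p,q)$-partition (since cycle vertices two apart on a $4$-cycle lie in the same part, we move the tree to the antipodal vertex so that parity is preserved). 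To make the bookkeeping clean I would use Lemma~\ref{lem:polansky} (Polansky): writing $G$ as the identification of $G_u$ (the rest of the graph) with the rooted tree $T$ at various attachment points, the Wiener index is a linear function of $t_{G_u}(\cdot)$ at the attachment vertex. So moving $T$ from $v_i$ to $v_j$ changes $W$ by $(|T|-1)\bigl(t_{G-T+v_i}(v_j) - t_{G-T+v_i}(v_i)\bigr)$, and I need to choose the move so that this is positive.

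Concretely, I would first contract attention to the two trees $T$ and $T'$ attached at an \emph{antipodal} pair, say $v_1$ and $v_3$. In the host graph $H := G$ with both $T$ and $T'$ removed (leaving the $4$-cycle plus $T_2, T_4$ only, but with $v_1, v_3$ still present as bare cycle vertices), compare $t_H(v_1)$ and $t_H(v_3)$; without loss of generality $t_H(v_1) \le t_H(v_3)$. Then by Lemma~\ref{lem:du} applied twice (once to place $T$, once to place $T'$, treating the other as part of the fixed graph) the configuration with \emph{both} $T$ and $T'$ relocated to $v_1$ is at least as good, and I must extract a \emph{strict} gain. The strict gain should come from the observation that once two nontrivial trees sit at the same cycle vertex one can further improve by the same relocation/Lemma~\ref{lem:du} argument used for the path-straightening, or — more directly — by noting that if all four trees were nontrivial, then moving $T_1$ entirely onto $v_3$ strictly increases the distance from the vertices of $T_1$ to the (nonempty) set $T_2 \cup T_4$, because on a $4$-cycle the distance from $v_1$ to $v_2$ equals the distance from $v_3$ to $v_2$, so there is no loss on that side, while the distance to everything hanging below $v_3$ (including $T'$, which is nonempty) strictly increases. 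Summing, $W$ strictly increases, contradicting extremality.

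The main obstacle I anticipate is \textbf{ruling out exact ties}: on the $4$-cycle, relocating a tree between antipodal vertices is distance-neutral with respect to the two ``side'' trees $T_2, T_4$, so the strict inequality has to be squeezed out of the remaining vertices, and one must make sure that the configuration cannot be perfectly symmetric in a way that kills the gain. I would handle this by choosing the move direction using the transmission comparison from Lemma~\ref{lem:du} and then separately checking the degenerate symmetric case ($|T_1| = |T_3|$ and $|T_2| = |T_4|$ with isomorphic trees), where I would instead relocate a tree to an \emph{adjacent} cycle vertex and redo the count, or invoke that such a graph still admits the path-lengthening move from Lemma~3's proof. A secondary nuisance is simply organizing the casework so that every removal/reattachment keeps the graph unicyclic and bipartite with the correct part sizes; since all four cycle vertices alternate between $P$ and $Q$, reattaching a whole rooted tree to the vertex of the cycle in the \emph{same} part as its old root is always legal, which is what makes the antipodal move the natural choice.
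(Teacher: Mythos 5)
Your central move goes in the wrong direction, and this is not just a tie-breaking issue. Suppose all four trees $T_1,\dots,T_4$ are nontrivial and you detach $T_1-v_1$ from $v_1$ and re-root it at the antipodal vertex $v_3$. Distances from the moved vertices to $T_2$ and $T_4$ are indeed unchanged, and the distance to $v_1$ grows by $2$; but the distance to every vertex of $T_3-v_3$ \emph{decreases} by $2$ (it was $\dist(a,v_1)+2+\dist(v_3,b)$ and becomes $\dist(a,v_1)+\dist(v_3,b)$), contrary to your claim that it ``strictly increases.'' Quantitatively, in the host graph $H=G\setminus(T_1-v_1)$ one has $t_H(v_3)-t_H(v_1)=2-2|T_3|\le -2$ once $|T_3|\ge 2$, so by Lemma~\ref{lem:du} the antipodal relocation strictly \emph{decreases} the Wiener index. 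Your preliminary step has the same defect: in the host with both $T_1$ and $T_3$ stripped, $v_1$ and $v_3$ are symmetric, so $t_H(v_1)=t_H(v_3)$ always and the WLOG comparison is vacuous; after placing $T_3$ back at $v_3$, the transmission at $v_3$ is strictly smaller than at $v_1$, so merging both trees at one antipodal vertex is strictly worse, not ``at least as good.'' Your fallback of moving a tree to an \emph{adjacent} cycle vertex contradicts your own (correct) parity observation, since that flips the part of every vertex in the moved tree; and the path-lengthening move from the $4$-cycle lemma is unavailable here because the cycle already has length four.

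The missing idea is that the target of the relocation need not be a cycle vertex. The paper's proof keeps the host $G'=G\setminus(T_1-v_1)$, assumes without loss of generality $|T_2|\le |T_4|$, and moves $T_1$ to a vertex $x$ that is a neighbour of $v_2$ \emph{inside the tree} $T_2$. This $x$ is at distance two from $v_1$, hence in the same part, so the $(p,q)$-partition is preserved; and a direct computation gives $t_{G'}(x)-t_{G'}(v_1)=2|T_4|-2|T_x|+2>0$ (where $T_x$ is the subtree of $T_2$ hanging below $x$, so $|T_x|\le |T_2|-1\le |T_4|-1$), after which Lemma~\ref{lem:du} yields the strict increase and the contradiction. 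If you want to repair your write-up, replace the antipodal move by this ``move into a neighbouring tree'' step; restricted to the four cycle vertices alone, no parity-preserving relocation can increase the Wiener index in this configuration.
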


\begin{proof}
Assume that $G \in E_{p,q}$ and
denote by $v_1,\ldots, v_4$ vertices on its cycle. We assume for a
contradiction that there is a rooted tree $T_i$ for each $v_i, i \in
\{1, \ldots,4\}$ such that $|T_i| \ge 2$.

Let us define graphs $G':= G \setminus (T_1-v_1)$ and $H := T_1$.
Without loss of generality we may assume that $v_1$ is
in part $P$ and $|T_2| \leq |T_4|$.

We first observe that the transmission of $v_1$ in $G'$ can be written as
$$ t_{G'}(v_1) = t_{T_2}(v_2) + t_{T_3}(v_3) + t_{T_4}(v_4) + |T_2| + 2|T_3| + |T_4|.$$

\begin{figure}[h]
    \centering
    \includegraphics[scale=0.7]{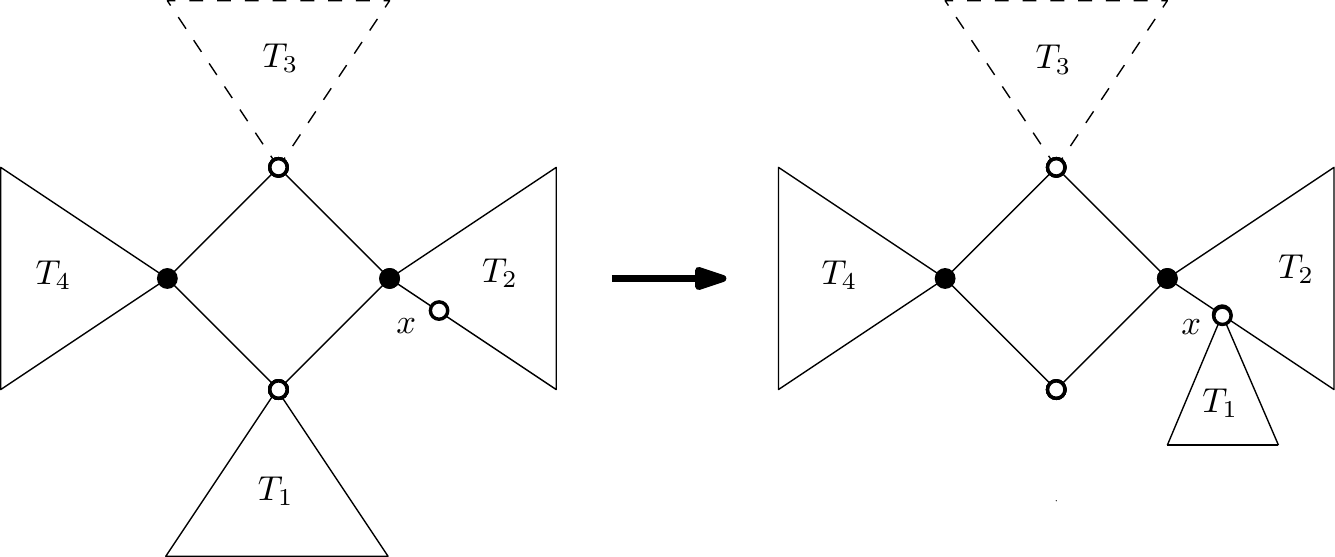}
    \caption{An illustration of situation in Lemma~\ref{lem:jeden_nenaveseny}.}
    \label{fig:antipodal}
\end{figure}

Let $x \in V(T_2)$ be any vertex of $T_2$ such that $v_2x \in E(G)$. Such
vertex must exist because $|T_2| \ge 2$.  Our aim is to prove that
$t_{G'}(v_1) < t_{G'}(x)$ and consequently apply Lemma~\ref{lem:du}.

Denote by $T_x$ the tree component of $G-v_2x$, see Figure~\ref{fig:antipodal}. Note that both $x$ and $v_1$ are in part $P$. The transmission of $x$ in $G'$ is equal to
$$t_{G'}(x) = t_{T_x}(x) +t_{T_2 \setminus T_x}(v_2) + |T_2 \setminus T_x |+ t_{T_3}(v_3) + t_{T_4}(v_4) +
 2|T_3| + 3|T_4| + 2.$$

It is easy to see that $t_{T_2}(v_2) = t_{T_x}(x) +t_{T_2 \setminus T_x}(v_2) + |T_x|$
and hence it follows
$$t_{G'}(x) = t_{T_2}(v_2) + t_{T_3}(v_3) + t_{T_4}(v_4) + |T_2| + 2|T_3| + 3|T_4| - 2|T_x| + 2.$$

By observing that $T_x \subset T_2$ and combining this with the assumption
$|T_2| \leq |T_4|$ we get
$$t_{G'}(x) - t_{G'}(v_1) = 2|T_4| - 2|T_x| + 2 > 0,$$
from which we conclude that $t_{G'}(v_1) < t_{G'}(x)$.

Let $G'v_1H$ be a graph obtained from $G'$ and $H$ by identifying $v_1$ in
$G'$ and $v_1$ in $H$ and let $G'xH$ be a graph obtained from $G'$ and $H$ by
identifying $x$ in $G'$ with $v_1$ in $H$. It is clear that $G'v_1H \cong G$.
By Lemma~\ref{lem:du} we get $W(G) = W(G'v_1H ) < W(G'xH)$, a contradiction with
$G$ being extremal.
\end{proof}

\begin{lemma}\label{lem:antipodal}
Every extremal graph $G \in E_{p,q}$ has two antipodal vertices of degree 2 on its cycle.
\end{lemma}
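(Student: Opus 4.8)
The plan is to build on Lemma~\ref{lem:jeden_nenaveseny}, which already guarantees one vertex of degree two on the $4$-cycle, and to push the same idea to force a \emph{second} such vertex, located antipodally. Let the cycle be $v_1v_2v_3v_4$ with rooted trees $T_1,\dots,T_4$. By Lemma~\ref{lem:jeden_nenaveseny} we may assume $|T_2|=1$, i.e. $v_2$ has degree two. Suppose for contradiction that neither antipodal pair is all-degree-two; since $v_2$ is already a degree-two vertex, this means $v_4$ has a nontrivial rooted tree, $|T_4|\ge 2$, and moreover at least one of $v_1,v_3$ also has a nontrivial tree (otherwise $v_1,v_3$ would be the antipodal degree-two pair). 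So we are in the situation $|T_2|=1$, $|T_4|\ge 2$, and $|T_1|\ge 2$ or $|T_3|\ge 2$.

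First I would handle the case where one of $v_1,v_3$ — say $v_1$ — has $|T_1|\ge 2$. The strategy mirrors the previous lemma: pick a neighbour $x$ of $v_1$ inside $T_1$, set $H:=T_1$ and $G':=G\setminus(T_1-v_1)$, compute $t_{G'}(v_1)$ and $t_{G'}(x)$ explicitly in terms of $t_{T_3}(v_3)$, $t_{T_4}(v_4)$, $|T_3|$, $|T_4|$ and the subtree $T_x$ hanging off $x$, and show $t_{G'}(v_1)<t_{G'}(x)$. Here the key arithmetic fact that makes $x$ "more central" is that from $v_1$ the tree $T_4$ (and whatever sits beyond) is at distance one while from $x$ it is at distance two, etc.; since $v_2$ carries only a single vertex, the imbalance pushes in the right direction and, using $|T_4|\ge 2$ together with $|T_x|\le |T_1|-1$, one gets a strictly positive difference. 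Then Lemma~\ref{lem:du} applied to $G=G'v_1H$ versus $G'xH$ yields $W(G)<W(G'xH)$, contradicting extremality. By the symmetry $v_1\leftrightarrow v_3$, the same works if instead $|T_3|\ge 2$.

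It remains to rule out the configuration in which $|T_2|=1$ and $|T_1|=|T_3|=1$ but $|T_4|\ge 2$ \emph{together with} the possibility that some \emph{other} pair, not yet named, fails — but in a $4$-cycle the only two antipodal pairs are $\{v_1,v_3\}$ and $\{v_2,v_4\}$, and if $|T_1|=|T_2|=|T_3|=1$ then $\{v_1,v_3\}$ is an antipodal degree-two pair and there is nothing to prove. Hence the only genuinely open case is the one treated in the previous paragraph, and after dispatching it the lemma follows. (One should also note at the outset that a $4$-cycle exists by the first lemma, so "antipodal on the cycle" is well-defined.)

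The main obstacle I anticipate is bookkeeping in the transmission computations: unlike Lemma~\ref{lem:jeden_nenaveseny}, moving $H$ from $v_1$ to a neighbour $x\in T_1$ changes distances to $T_3$ and $T_4$ asymmetrically (one antipodal, one adjacent), and one must be careful that the term coming from $T_4$ beats the term coming from $T_3$ — which is exactly where $|T_4|\ge 2$ versus $|T_2|=1$ is used, and where the weaker-looking assumption might force a secondary case split on whether $|T_3|=1$ or $|T_3|\ge 2$ (handled by choosing, among $v_1$ and $v_3$, the one whose \emph{opposite} tree is the larger of $T_1,T_3$, or simply by symmetry). Getting the single clean inequality $t_{G'}(x)-t_{G'}(v_1)>0$ out of these competing contributions is the crux; everything after that is a direct invocation of Lemma~\ref{lem:du}.
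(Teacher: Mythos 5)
Your reduction to the configuration ``$v_2$ has degree two, $|T_4|\ge 2$, and at least one of $|T_1|,|T_3|$ is at least $2$'' is correct and coincides with the paper's case split (it is the union of the paper's Case 1, three nontrivial trees, and Case 2, two nontrivial trees on adjacent cycle vertices). But the single transformation you propose to finish is both ill-formed and, once repaired, insufficient. Ill-formed: you take $x$ to be a neighbour of $v_1$ \emph{inside} $T_1$, set $H:=T_1$ and $G':=G\setminus(T_1-v_1)$, and then speak of $t_{G'}(x)$ --- but $x$ has been deleted from $G'$, and one cannot re-attach $T_1$ at a vertex of $T_1$ itself. In Lemma~\ref{lem:jeden_nenaveseny} the vertex $x$ is deliberately chosen in a \emph{different, retained} tree adjacent to the one being moved; that is what makes $G'xH$ meaningful and keeps the $(p,q)$-partition. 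Insufficient: repairing the construction (move $T_1$ onto a child $x$ of $v_4$, or move $T_4$ onto a child $x$ of $v_1$) gives a transmission difference of the form $2|T_2|+2-2|T_x| = 4-2|T_x|$, resp.\ $2|T_3|+2-2|T_x|$, which is positive only when the displaced subtree $T_x$ is a single vertex (resp.\ when $|T_3|$ is at least about $|T_1|$). The Lemma~\ref{lem:du}-style relocation succeeds only when \emph{both} cycle-neighbours of the root of the relocated tree carry nontrivial trees, i.e.\ only in the paper's Case 1.

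The genuinely hard case --- exactly two nontrivial trees sitting on \emph{adjacent} cycle vertices (your $|T_3|=1$, $|T_1|,|T_4|\ge 2$) --- is not handled by any single-subtree relocation. Concretely, take $C_4$ with a path of length two hanging from each of two adjacent cycle vertices ($n=8$, $p=q=4$): every admissible attachment point for the displaced subtree has the \emph{same} transmission in $G'$, so Lemma~\ref{lem:du} yields nothing. The paper needs two further ideas here: when one of the two roots has degree at least $4$, it peels off a single branch $T_x$ and moves it to the antipodal degree-two vertex (Case 2a, with difference $2|T_2'|-2>0$); and when both roots have degree exactly $3$, it performs a surgery that simultaneously re-hangs two subtrees (delete $v_2y$, add $xy$, transplant the children of $x$ onto $v_2$) and verifies $W(G'')-W(G)=2|T_y|-2|T_x|+2>0$ by direct computation rather than via Lemma~\ref{lem:du}. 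Your proposal has no counterpart to either step, so the argument is incomplete precisely where the lemma is nontrivial.
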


\begin{proof}
We assume for a contradiction that $G \in E_{p,q}$ and that it does
not have two antipodal vertices on its cycle with degree 2. We distinguish two
cases.

\emph{Case 1.}
Graph $G$ has only one vertex of degree two on its cycle. Denote the vertices
of cycle by $v_1, \ldots, v_4$ consecutively so that the vertex of degree
two is denoted by $v_3$.  We can proceed in the same way as in the proof of
the previous lemma to get a contradiction.

\emph{Case 2.}
Graph $G$ has exactly two vertices of degree two on its cycle and they are
adjacent. We denote the vertices on the cycle $v_1,\ldots v_4$ consecutively
such that $v_3$ and $v_4$ have degree two. There is a tree $T_1$
rooted in $v_1$ such that $|T_1|\ge2$ and a tree $T_2$ rooted in $v_2$ such that $|T_2|\ge2$.

Again, we distinguish two cases.

\emph{Case 2a.}
We assume that at least one of $v_1$ and $v_2$ has degree at least 4.
Without loss of generality suppose that $v_2$ is the vertex. Denote by $x$ any
of its neighbors in $T_2$ and by $T_x$ the tree component of $G-v_2x$.

We define graphs $G' :=  G \setminus T_x $ and $H := T_x \cup v_2x$. Note
that $v_2$ is included in both of $G'$ and $H$ but the edge $v_2x$ is only in
$E(H)$. See Figure \ref{fig:lemma_5_case2a}. We will proceed in a similar way as in the proof of the previous
lemma. We define $T_2':= T_2 \setminus T_x$.  It holds that
\begin{align*}
 t_{G'}(v_2) &= t_{T_2'}(v_2) + t_{T_1}(v_1) + |T_1| + 3, \textrm{ and} \\
 t_{G'}(v_4) &= t_{T_2'}(v_2)+ 2|T_2'| + t_{T_1}(v_1) + |T_1| + 1.
\end{align*}

\begin{figure}[h]
    \centering
    \includegraphics[scale = 0.75]{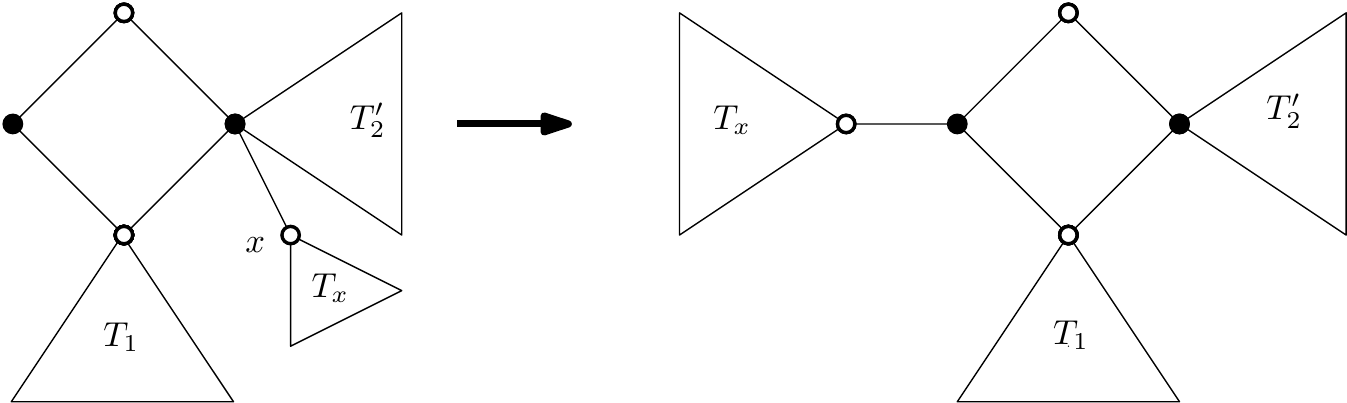}
    \caption{An illustration of construction in Case 2a of Lemma~\ref{lem:antipodal}.}
    \label{fig:lemma_5_case2a}
\end{figure}

We assume that $v_2$ has degree at least 4 and thus $|T_2'| \geq 2$. It follows
$$ t_{G'}(v_4) - t_{G'}(v_2) = 2|T_2'| - 2 > 0,$$
thus $t_{G'}(v_2) < t_{G'}(v_4)$. 

Let $G'v_2H$ be a graph obtained from $G'$ and $H$ by identifying $v_2 \in
V(G')$ and $v_2 \in V(H)$ and let $G'v_4H$ be a graph obtained from $G'$ and
$H$ by identifying $v_4 \in V(G')$ with $v_2 \in V(H)$. Observe that $G'v_2H
\cong G$. It follows from Lemma~\ref{lem:du} that $W(G'v_2H ) < W(G'v_4H)$,
a contradiction because $G$ was extremal.

\emph{Case 2b. }

We assume that both $v_1$ and $v_2$ have degree 3. Without loss of generality
we assume $|T_1| \leq |T_2|$. Let us denote by $x$ the vertex adjacent to
$v_1$ that is not on the cycle and denote by $T_x$ the tree component of
$G-v_1x $.  Similarly, let us denote by $y$ the vertex adjacent to $v_2$ that
is not on the cycle and by $T_y$ the tree component of $G-v_2y$. As $|T_1|
\leq |T_2|$ also $|T_x| \leq |T_y|$. Let $G''$ be the graph obtained from $G$
by deleting edge $v_2y$, adding edge $xy$ and edges from $v_2$ to all
neighbors of $x$ in $T_x$, and, finally, by deleting edges between $x$ and its
neighbors in $T_x$. Note that $G''$ has the same bipartition as $G$. See
Figure \ref{fig:lemma_5_case2b}. We calculate the Wiener indices of $G$ and
$G''$

\begin{figure}[h]
    \centering
    \includegraphics[scale = 0.75]{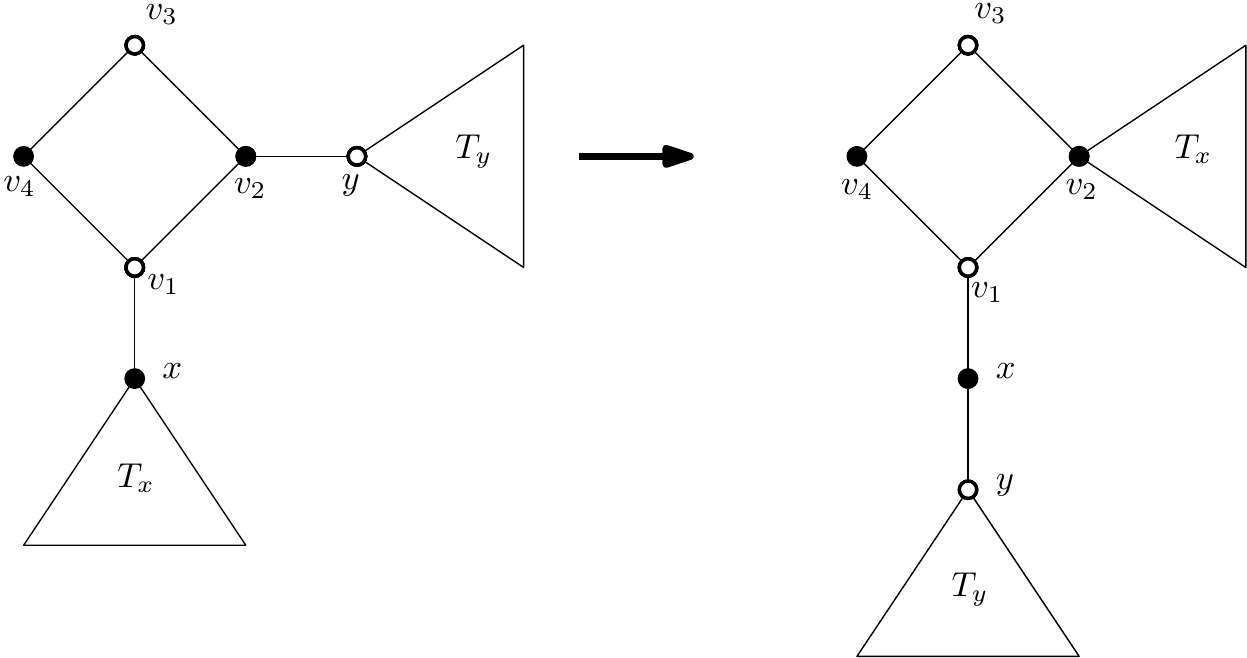}
    \caption{An illustration of construction in Case 2b of Lemma~\ref{lem:antipodal}.}
    \label{fig:lemma_5_case2b}
\end{figure}

\begin{align*}
W(G) &= \sum_{u \in T_x, v \in T_y} \dist_G(u,v) +4t_{T_x}(x) + |T_x|(1+4+3) \\
&\quad + 4t_{T_y}(y) + |T_y|(1+4+3)+8,\\
W(G'')&= \sum_{u \in T_x, v \in T_y} \dist_{G''}(u,v)+ 4t_{T_x}(x) + |T_x|(2+4)\\ 
&\quad +4t_{T_y}(y) + |T_y|(1+2+3+4)+10.
\end{align*}

Note that for any $u \in T_x$ and any $v \in T_y$, the distances in $G$ and
$G''$ are the same, i.e.\ $\dist_G(u,v)=\dist_{G''}(u,v)$. We
conclude that $$W(G'')-W(G)=2|T_y|-2|T_x|+2>0,$$ a contradiction with the
extremality of $G$.
\end{proof}

So far, it follows from the previous lemmata that for arbitrary $p,q\ge2$ every graph in $E_{p,q}$
contains a cycle $C_4$ with two antipodal vertices on $C_4$ of degree $2$.

For the rest of our paper we need a definition of \emph{broom} graph.

\begin{definition}
For any $a \geq 1$ and $b \geq 0$, we say that a tree $T$ is a \emph{broom} if $T$ arises from a path $P=x_1, \dots x_a$  on $a$ vertices by adding $b$ pendant vertices to $x_a$. We say that $x_1$ is the \emph{root} of the broom.
\end{definition}

\begin{lemma}
If $G \in E_{p,q}$ is a graph with a tree $T_1$ attached to $v_1$ and a $T_2$ attached to $v_3$, then both trees $T_1$ and $T_2$ are isomorphic to brooms with roots in $v_1$, $v_3$, respectively.
\end{lemma}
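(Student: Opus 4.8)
The plan is to decouple the two trees, turn the statement into a single‑tree optimisation, and then run an exchange argument. By the preceding lemmata we may label the cycle $v_1v_2v_3v_4$ so that $v_2,v_4$ have degree $2$ and the (rooted) trees $T_1,T_2$ hang at $v_1,v_3$. Put $G':=G-(T_1-v_1)$; this is exactly the cycle together with $T_2$, so $|G'|=|T_2|+3$ and both $W(G')$ and $t_{G'}(v_1)$ are independent of the internal shape of $T_1$. Gluing $T_1$ and $G'$ at $v_1$ and applying Lemma~\ref{lem:polansky},
$$W(G)=W(T_1)+W(G')+(|T_1|-1)\,t_{G'}(v_1)+(|G'|-1)\,t_{T_1}(v_1).$$
Since $G\in E_{p,q}$ maximises $W$ in particular over all graphs obtained from $G$ by keeping the cycle and $T_2$ and replacing $T_1$ by an arbitrary tree rooted at $v_1$ with the same two colour‑class sizes, the right‑hand side shows that $T_1$ maximises $\Phi(T):=W(T)+(|G'|-1)\,t_{T}(v_1)$ over all such trees $T$. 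So it suffices to prove: the maximiser of $W(T)+\lambda\,t_T(r)$ (for the fixed $\lambda=|G'|-1$) among trees $T$ rooted at a vertex $r$, with prescribed order and prescribed colour‑class sizes, is a broom rooted at $r$. Swapping the roles of $T_1$ and $T_2$ then handles $T_2$.

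Suppose $T=T_1$ is such a maximiser but is not a broom rooted at $r=v_1$; we will produce a graph with the same $(p,q)$-partition and strictly larger Wiener index. Fix a longest path $P=(r=u_0,u_1,\dots,u_k)$ in $T$ starting at $r$; then $u_k$ is a leaf, and by maximality of $P$ every off-$P$ subtree hanging at $u_i$ has depth at most $k-i$ measured from $u_i$. Since $T$ is not a broom, exactly one of the following holds: (D1) some vertex off $P$ is not a leaf of $T$; or (D2) every vertex off $P$ is a leaf of $T$, but some such leaf is attached to a vertex $u_i$ with $i<k-1$. I treat the two defects by two modifications, each strictly increasing $\Phi$.

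For (D1), let $S$ be a deepest off-$P$ subtree (hanging at some $u_i$, $i\le k-1$, with $|S|\ge 2$); detach a longest root-to-leaf path of $S$ and append it to the deep end of the spine $P$, carrying the remainder of $S$ along rigidly. To keep the colour classes the new attachment point is $u_k$ or $u_{k-1}$, according to the parity of $k-i$. This strictly lengthens the spine, and a bookkeeping of the altered distances shows $\Phi$ increases: every vertex of $S$ moves strictly farther from the complement of $S$, and that complement contains all of $G'$, so the resulting gain dominates the only distances that can shrink, which lie inside $T$ between $S$ and a bounded-depth piece of the spine. For (D2), slide the stray leaf from $u_i$ towards the deep end, to the spine vertex of the correct parity. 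Here the sign of $\Phi(\text{new})-\Phi(\text{old})$ is decided by a competition: the mass of $G'$ favours pushing the leaf deep, while leaves already sitting at the intended broom position in the same colour class favour the opposite. One shows the former wins inside an extremal $G$, using $p\le q$ to bound the size of the larger colour class in terms of $|T_2|=|G'|-3$. When this balance is too tight to see directly, it is cleaner to invoke Lemma~\ref{lem:du}: writing $G=(G-(H'-u_i))\,u_i\,H'$ with $H'$ the stray subtree plus its connecting edge, relocate $H'$ to the vertex of maximum transmission in $G-(H'-u_i)$ lying in the colour class of $u_i$; as long as a defect remains, that vertex is not $u_i$, so Lemma~\ref{lem:du} supplies the strict increase. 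Iterating exhausts the defects, leaving a broom.

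The hardest step — and the one I expect to be the true obstacle — is case (D2): the intuitive move of pushing a stray leaf to the broom position is not unconditionally Wiener-increasing, because a very lopsided tree hoards its mass near that position, so a single extra vertex is then better placed elsewhere. One must therefore either show, via $p\le q$ and the size count above, that this lopsided regime cannot occur strictly inside an extremal graph, or re-route through Lemma~\ref{lem:du} and a maximum-transmission target. The other, pervasive nuisance is the parity restriction coming from the bipartition, which forces every subtree relocation to move an even number of edges along the spine and may waste a level; this is harmless because each exchange gains an amount proportional to $|G'|$, which swamps the constant loss. Once no defect remains, $T_1$ is a broom rooted at $v_1$; the identical argument with $T_1$ and the cycle frozen shows $T_2$ is a broom rooted at $v_3$.
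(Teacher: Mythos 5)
Your reduction via Lemma~\ref{lem:polansky} to maximising $\Phi(T)=W(T)+(|G'|-1)\,t_T(v_1)$ over rooted trees with fixed colour-class sizes is sound, and your overall plan (an exchange argument that relocates stray subtrees along a longest path and invokes Lemma~\ref{lem:du}) is the same kind of argument the paper uses. But the proposal has a genuine gap exactly where you say you expect one, and the gap is not confined to (D2). In (D1) you assert that after moving a deepest off-path subtree $S$ to the deep end of the spine ``every vertex of $S$ moves strictly farther from the complement of $S$''; this is false. If the spine already carries a large bundle of pendant vertices at its deep end (which is precisely the extremal shape one is converging to), all of those vertices get \emph{closer} to $S$, and when that bundle is large while $G'$ and the top of the tree are light, the move can strictly decrease the Wiener index. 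So ``push everything to the deep end'' is not unconditionally correct in either case, and the ``competition'' you describe must actually be adjudicated. Your proposed escape via Lemma~\ref{lem:du} --- reattach the stray subtree at the maximum-transmission vertex of its colour class --- does not close the gap either: you would need to prove that whenever a defect remains, the current attachment vertex $u_i$ is \emph{not} of maximum transmission in its colour class in $G-(H'-u_i)$, and no argument for that is given (nor is it obvious, since the colour-class restriction and the attached structure block the usual ``the maximum is attained at a leaf'' shortcut).

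The paper resolves precisely this point with an explicit trichotomy. It takes the bad vertex $u_i$ nearest the top of a longest path, detaches all of its off-path subtrees at once (forming $H_2$), and compares the transmissions of three concrete candidate reattachment points in $H_1$: $u_{i+2}$ when $k<d$, and $u_2$ or $u_1$ (according to the parity of $\dist(u_2,u_i)$) when $k\ge d$, where $k$ is the size of the deep component past $u_i$ and $d=\deg_{H_1}(u_2)$ measures the mass near the top. In other words, when the deep side is heavy the stray subtree is moved \emph{towards the root}, not away from it. Supplying this case analysis (or an equivalent quantitative criterion for which direction wins) is the missing content of your proof; without it the argument does not go through.
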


\begin{proof}
 We
denote the vertices of $C_4$ subgraph of $G$ by $v_1, \ldots, v_4$ consecutively such that
$v_2$ and $v_4$ have degree two.

Let $P= u_1, \dots, u_t$ be a longest path in $G$. We say that a vertex $v \in V(G)$ is \emph{bad} if 
\begin{itemize}
\item $\mathrm{deg}_G(v)>2$ for $v \in V(G) \setminus \{u_2,u_{t-1},v_1,v_3\}$, or 
\item $\mathrm{deg}_G(v)>3$ for $v \in  \{v_1,v_3\} \setminus \{u_2,u_{t-1}\}$.
\end{itemize}
Suppose for a contradiction that $G \in E_{p,q}$ and trees attached to $C_4$
in $G$ are not isomorphic to brooms. Thus there exits a bad vertex in path
$P$, different from $v_2$ and $v_4$ since these are of degree two. Without
loss of generality we may suppose that the bad vertex is in $V(T_1)$. If there
is more than one bad vertex in $V(T_1)$, we will choose the one such that its
distance $u_2$ is minimal possible and we denote such bad vertex as $u_i$.
Therefore, the subgraph of $G$ induced by $\{u_2, \dots, u_i\}$ is a path and
$i \geq 3$.

We set $A := N(u_i) \setminus \{u_{i-1}, u_{i+1}, v_2, v_4\}$.  Let $H_1$ be
the component of $G \setminus A$ containing the vertex $u_i$ and  let $H_2$ be
the component of $G \setminus \{u_{i-1}, u_{i+1}, v_2, v_4\}$ containing the
vertex $u_i$. Note that $G$ arises from $H_1$ and $H_2$ by identifying $u_i
\in V(H_1)$ with $u_i \in V(H_2)$, or shortly $G=H_1 u_i H_2$. Let $K$ be the
component of $G \setminus u_i$ that contains $u_{i+1}$. Let $k := |V(K)|$ and
$d = \mathrm{deg}_{H_1}(u_2)$. We distinguish three cases.

\emph{Case 1. } $k < d$

We set $G_1:= H_1 u_{i+2} H_2$. In order to prove that $W(G_1) >W(G)$ we
compute the difference $t_{H_1}(u_{i+2})-t_{H_1}(u_{i})$ and apply
Lemma~\ref{lem:du}.

Note that there are at least $i+d-2 >d$ vertices $v$ in $V(H_1)$ such that
their distance to $u_{i+2}$ is bigger than their distance to $u_i$, namely
$\dist_{H_1}(v,u_{i+2})=\dist_{H_1}(v,u_i)+2 $. Further note
that
$\dist_{H_1}(u_{i+1},u_{i})=\dist_{H_1}(u_{i+1},u_{i+2})=1$
and that for at most $k-1$ vertices $v$ in $V(H_1)$  ($v \in V(K) \setminus
u_{i+1}$), the distance $\dist_{H_1}(v,u_{i+2}) \geq
\dist_{H_1}(v,u_i)-2$.

Thus we obtain that
$$t_{H_1}(u_{i+2})-t_{H_1}(u_{i}) \geq 2d -2(k-1)=2(d-k)+2>0,$$
from which we conclude $t_{H_1}(u_{i+2})>t_{H_1}(u_{i})$.

Observe that $u_i$ and $u_{i+2}$ are in the same part of $G$. Hence, $G_1$ also
has a $(p,q)$-bipartition. Again, we conclude that $W(G_1) >W(G)$, a
contradiction.

\emph{Case 2. } $k \geq d$ and $\dist_G(u_2,u_i)$ is even.

In this case we set $G_2:= H_1 u_{2} H_2$. We aim to prove that $W(G_2) >W(G)$
by computing the difference $t_{H_1}(u_{2})-t_{H_1}(u_{i})$ and using
Lemma~\ref{lem:du}.

Note that there are exactly $d-1$ vertices $v$ (the neighbors of $u_2$) in
$V(H_1) \setminus \{ u_2, \dots u_i \}$ such that
$\dist_{H_1}(v,u_i)=\dist_{H_1}(v,u_2)+(i-2)$. Further note that for any $v
\in V(K)$, the distance $\dist_{H_1}(v,u_i)=\dist_{H_1}(v,u_2)-(i-2)$. Thus
$$t_{H_1}(u_{2})-t_{H_1}(u_{i})= \sum_{v \in V(H_1) \setminus \{u_2, \dots,
u_i\}} \left(\dist_{H_1} (v,u_2) - \dist_{H_1} (v,u_i)\right) = $$ $$ =
k(i-2)-(d-1)(i-2) = (i-2)(k-d+1) >0$$  as $i \geq 3$ and $k\geq d$. It follows
that  $t_{H_1}(u_{2})>t_{H_1}(u_{i})$. Note that again $u_i$ and $u_{2}$ are
in the same partition of $G$ and thus $G_2$ also has a $(p,q)$-bipartition. By
Lemma~\ref{lem:du} we obtain that $W(G_2) >W(G)$, a contradiction.

\emph{Case 3. }  $k \geq d$ and $\dist_G(u_2,u_i)$ is odd.

We set $G_3:= H_1 u_{1} H_2$. As $u_i$ and $u_{1}$ are in the same partition
of $G$ and the graph $G_3$ also has a $(p,q)$-bipartition. Observe that
$t_{H_1} (u_1)= t_{H_1} (u_2) + |V(H_1)|-2$. This implies that
$t_{H_1}(u_{1})>t_{H_1}(u_{2})>t_{H_1}(u_{i})$ and again by
Lemma~\ref{lem:du} we obtain that $W(G_3) >W(G)$, a contradiction.
\end{proof}

\begin{lemma}
Let $G \in E_{p,q}$. If $|P|<|Q|$. Then all the pendant vertices belong to part $Q$.
\end{lemma}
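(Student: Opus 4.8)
The plan is to argue by contradiction: suppose some pendant vertex $\ell$ lies in $P$. I would first dispose of the sub-case in which $Q$ contains \emph{no} pendant vertex. Then every vertex of $Q$ has degree at least $2$, and since $G$ is bipartite each of its edges has exactly one endpoint in $Q$, so $\sum_{v\in Q}\deg_G(v)=|E(G)|$; as $G$ is connected and unicyclic, $|E(G)|=|V(G)|=p+q$, and hence $p+q=\sum_{v\in Q}\deg_G(v)\ge 2q$, i.e.\ $p\ge q$, contradicting $p<q$. So from now on $Q$ also contains a pendant vertex $m$, and the task becomes to derive a contradiction from the simultaneous presence of a pendant $\ell\in P$ and a pendant $m\in Q$.

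Let $w\in Q$ be the neighbour of $\ell$ and $w'\in P$ the neighbour of $m$; these four vertices are pairwise distinct (the parts rule out most coincidences, and $m=w$ or $\ell=w'$ would make $\{\ell,m\}$ a connected component, impossible since $p+q\ge 5$). The heart of the argument is to pit against each other two leaf-relocation moves: $G^{*}:=(G-\ell w)\cup \ell m$, which re-attaches $\ell$ as a pendant of $m$, and $G^{**}:=(G-m w')\cup m\ell$, which re-attaches $m$ as a pendant of $\ell$. Both graphs are connected and unicyclic (the deleted edge is incident to a leaf, hence not on the cycle), and — crucially — both preserve the $(p,q)$-bipartition, precisely because a $P$-leaf is re-attached to the $Q$-vertex $m$ and a $Q$-leaf to the $P$-vertex $\ell$, so no vertex changes side; hence $G^{*}$ and $G^{**}$ are legitimate competitors. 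Viewing $G$ as $G-\ell$ with a pendant edge attached at $w$ and $G^{*}$ as the same with that edge moved to $m$ (and, symmetrically, $G,G^{**}$ arising from $G-m$ with a pendant edge at $w'$ versus at $\ell$), Lemma~\ref{lem:du} together with the extremality of $G$ forces $t_{G-\ell}(m)\le t_{G-\ell}(w)$ and $t_{G-m}(\ell)\le t_{G-m}(w')$.

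To close the argument I would combine these inequalities. Deleting a pendant vertex $x$ from $G$ just removes the summand $\dist_G(\,\cdot\,,x)$ from each transmission, so with $D:=\dist_G(\ell,m)$ we get $t_{G-\ell}(m)=t_G(m)-D$, $t_{G-\ell}(w)=t_G(w)-1$, $t_{G-m}(\ell)=t_G(\ell)-D$, $t_{G-m}(w')=t_G(w')-1$; moreover a pendant vertex and its neighbour satisfy $t_G(\ell)=t_G(w)+(p+q)-2$ and $t_G(m)=t_G(w')+(p+q)-2$. Substituting these into the two inequalities and adding them, the terms $t_G(w)$ and $t_G(w')$ cancel, leaving $D\ge p+q-1$; but a unicyclic graph on $p+q$ vertices is not a path, so $\operatorname{diam}(G)\le p+q-2$, whence $D\le p+q-2$ — a contradiction. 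The main obstacle, and the reason the proof needs two moves rather than one, is that extremality only delivers the \emph{non-strict} inequalities above, so neither relocation is individually guaranteed to increase the Wiener index; it is their sum that is decisive. (The structural results proved earlier — the $4$-cycle and broom descriptions — are available, but this argument does not use them.)
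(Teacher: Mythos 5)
Your proof is correct and follows essentially the same route as the paper's: both arguments relocate the offending pendant vertex of $P$ onto the pendant vertex of $Q$ and invoke Lemma~\ref{lem:du} with $H=K_2$, using the identity $t_G(\ell)=t_G(w)+n-2$ for a leaf $\ell$ with neighbour $w$ together with the fact that distances in a unicyclic graph are at most $n-2$. The only differences are cosmetic — you symmetrize the two relocation moves and add the resulting non-strict inequalities where the paper breaks the symmetry by a without-loss-of-generality choice of the neighbour with smaller transmission — and you additionally justify (via the degree-sum count) that $Q$ must contain a pendant vertex, a step the paper leaves implicit.
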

\begin{proof}
Suppose for a contradiction that there are two pendant vertices
$x$ and $y$ such that $x \in P$ and $y \in Q$ (or vice versa). Clearly,
$x$ and $y$ belong to distinct brooms of $G$. Let $u$ be the only
neighbor of $x$ and $v$ be the only neighbor of $y$ in $G$. Without
loss of generality we may assume that $t_G(u) \le t_G(v)$.
Clearly $t_G(y) = t_G(v) + n - 2$. We define a graph $G' = G - x$. It holds that
\begin{align*}
 t_{G'}(u) &= t_G(u) - 1, \\
 t_{G'}(y) &\geq t_G(y) - n + 2.
\end{align*}
Therefore, we have
$$ t_{G'}(u) < t_{G}(u) \leq  t_{G}(v) = t_G(y) - n + 2 \leq t_{G'}(y).$$
Clearly, $G \cong G'uK_2$. By Lemma~\ref{lem:du} we have $W(G) = W(G'uK_2) < W(G'vK_2)$, a contradiction. 
\end{proof}

We define the \emph{height} of a rooted tree as the number of edges in the longest path between the root
and a leaf of the tree.

\begin{lemma} \label{lem:vyska_max_2}
Let $G \in E_{p,q}$ be an extremal graph. At least one of the trees (brooms) attached to $C_4$ in $G$ has the height at most two.
\end{lemma}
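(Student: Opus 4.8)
The plan is to argue by contradiction: if some $G\in E_{p,q}$ had two attached brooms both of height at least $3$, then one could push a short piece of one broom onto the other and strictly increase the Wiener index, contradicting extremality.

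First I would reduce to the essential case. By the preceding lemmata, $G$ contains a $C_4$ with a pair of antipodal degree‑$2$ vertices $v_2,v_4$, and any nontrivial tree attached to one of the other two cycle vertices $v_1,v_3$ is a broom rooted there; note also that $v_1,v_3$ lie in the same part, say $P$. If at most one of $v_1,v_3$ carries a nontrivial tree, the tree attached to the other is a single vertex of height $0$ and the lemma holds; so assume both $v_1,v_3$ carry nontrivial brooms $T_1,T_2$; write $n_1=|T_1|$, $n_2=|T_2|$, and (renaming if necessary) assume $n_1\le n_2$. Suppose, for contradiction, that $T_1$ and $T_2$ both have height at least $3$. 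Writing a broom as a path followed by a bundle of pendant leaves at its far end, ``height at least $3$'' forces the path part of $T_1$ to have at least three vertices.

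The transformation producing the competitor $G'$ is this: in $T_1$, delete the two path vertices nearest to $v_1$ and reconnect the rest of $T_1$ (its next path vertex, or, if none is left, its pendant leaves) directly to $v_1$; in $T_2$, lengthen the path part by two vertices, sliding the leaf bundle of $T_2$ two steps further from $v_3$. The two deleted vertices occupy consecutive depths in $T_1$, so one lies in $P$ and one in $Q$; the two new vertices of $T_2$ likewise split one in $P$ and one in $Q$, while the relocated bundle keeps its parity. Hence $G'$ is again an $n$‑vertex bipartite unicyclic graph with the same $(p,q)$‑partition and the same $C_4$, carrying a (shorter) broom at $v_1$ and a (longer) broom at $v_3$; in particular $G'$ is an admissible competitor.

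It then remains to verify $W(G')>W(G)$. The device here is that $v_2$ has degree $2$ while its two neighbours $v_1,v_3$ are already joined by a path of length $2$ through $v_4$, so deleting $v_2$ leaves all other distances intact; feeding this fact, together with the pendant‑tree structure of $T_1,T_2$, into two applications of Lemma~\ref{lem:polansky} (or into a direct distance count) yields
\[
W(G)=W(T_1)+W(T_2)+(n_2+2)\,t_{T_1}(v_1)+(n_1+2)\,t_{T_2}(v_3)+2n_1n_2+2n_1+2n_2+2 .
\]
Substituting the closed forms for the Wiener index and root‑transmission of a broom and subtracting, the bulky terms cancel and one is left with $W(G')-W(G)=2(n_2-n_1+2)\ge 4>0$, contradicting $G\in E_{p,q}$. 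The only real obstacle is this final computation: one must take care that the surgery preserves the $(p,q)$‑partition exactly, and then carry through the (fortunately telescoping) algebra.
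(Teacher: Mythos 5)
Your proof is correct, but it uses a genuinely different surgery than the paper's. The paper exploits the degree-$2$ cycle vertex $w$: because its antipodal partner $w'$ supplies the same connections, $W(G)=W(G-w)+t_G(w)$, so the authors delete $w$ and reattach it to $v_i$ and $v_{i-2}$, forming a new $C_4$ near the far end of one broom; the whole comparison then reduces to the transmission of the single vertex $w$ in the two graphs, at the cost of a parity case split on $i$ to preserve the bipartition. You instead keep the $C_4$ fixed and transfer two consecutive path vertices from the smaller broom to the larger one. I verified your two key identities: the decomposition $W(G)=W(T_1)+W(T_2)+(n_2+2)t_{T_1}(v_1)+(n_1+2)t_{T_2}(v_3)+2n_1n_2+2n_1+2n_2+2$ is exactly what two applications of Lemma~\ref{lem:polansky} give (gluing $T_1$ to $C_4$ at $v_1$, then $T_2$ at $v_3$), and the telescoping you assert does occur: writing the brooms as paths on $s$ (resp.\ $t$) non-root vertices with $a$ (resp.\ $b$) pendants, all quadratic and mixed terms cancel and the difference collapses to $2\bigl((t+b)-(s+a)\bigr)+4=2(n_2-n_1+2)\ge 4$. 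Your route buys a parity-free bipartition argument (two consecutive depths always contribute one vertex to each part) and a clean closed-form gain; the paper's route buys a much shorter computation. Two points to make explicit in a full write-up: only the \emph{smaller} broom needs height at least $3$ (lengthening the other broom requires nothing), so you in fact prove slightly more than the lemma states; and you should note that shortest paths between two vertices of an attached tree never use the cycle, which is what justifies treating $W(G[T_i])$ as $W(T_i)$ in the decomposition.
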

\begin{proof}
From the previous lemmata it follows that $G$ consists from $C_4$ with two brooms
attached to antipodal vertices of the cycle (we denote these vertices $v_0$
and $u_0$). We denote by $w$ one of the vertices incident with $v_0$ and $u_0$
on the cycle and by $w'$ the other one. Let $v_0, \ldots, v_i$ be the vertices
of the path in the broom attached to $v_0$ and $a$ be the number of
pendant vertices attached to $v_i$. Similarly, we denote by $u_0, \ldots, u_j$ the
vertices on the path in the broom attached to $u_0$ and by $b$ the
number of pendant vertices adjacent to $u_j$. See Figure~\ref{fig:lemma_vyska} for
an illustration.

\begin{figure}[h]
    \centering
    \includegraphics[scale = 0.75]{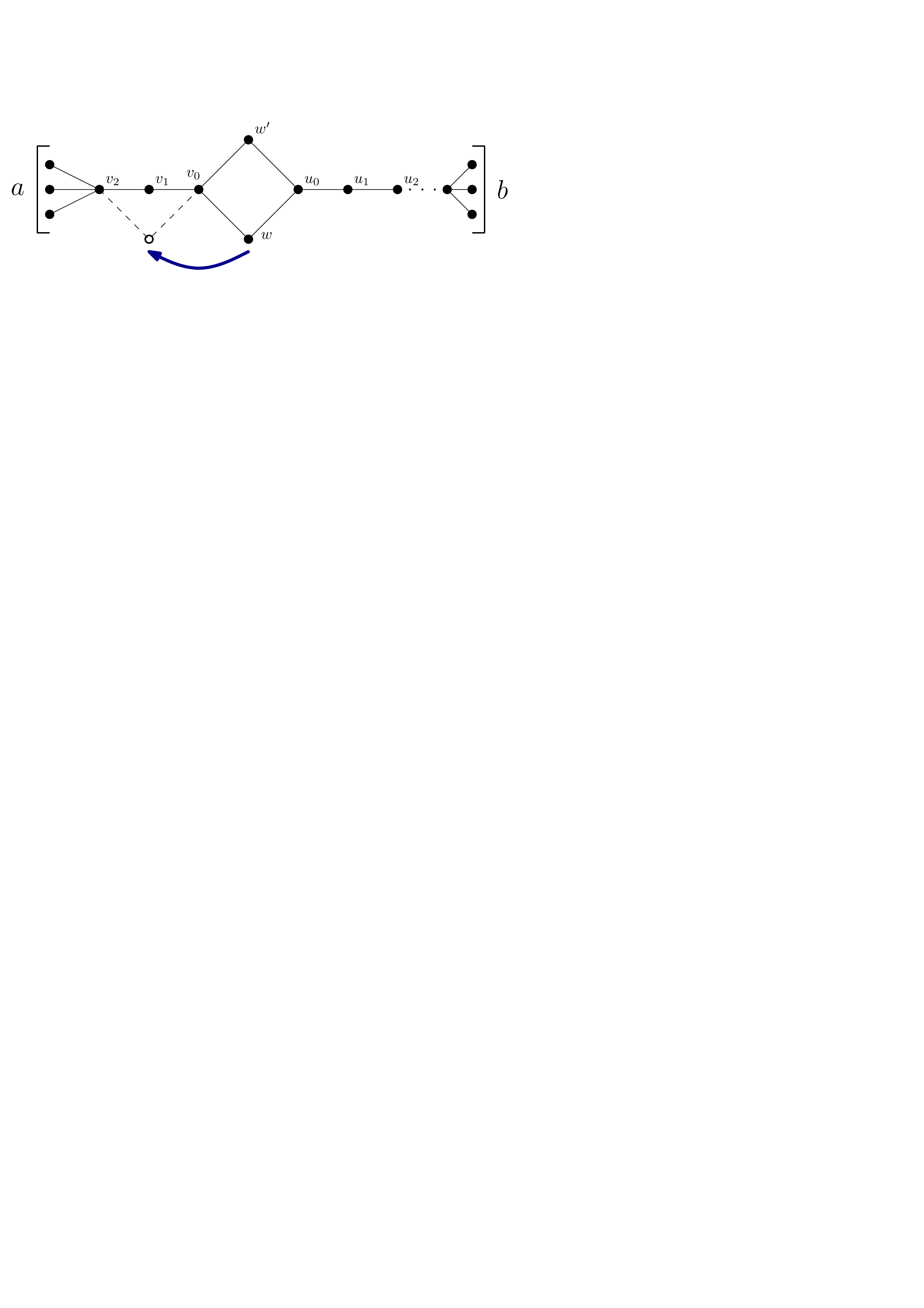}
    \caption{A possible scenario in Lemma~\ref{lem:vyska_max_2}.}
    \label{fig:lemma_vyska}
\end{figure}

We assume for a contradiction that $i,j \geq 2$.
Without loss of generality, $a \leq b$. We define the graph $G':= G \setminus w$. We divide the rest of the
proof into two cases, based on the parity of $i$.

\emph{Case 1. }
We assume that $i$ is even. Let $G_1$ be a graph obtained from $G'$ by joining a vertex $w$ to $v_i$ and $v_{i-2}$. Note that $G_1$ has the same partition as $G$ and $G_1$ has one of the brooms of height one.
Observe that for Wiener indices of $G$ and $G_1$ it holds that
\begin{align*}
W(G) &= W(G') + t_G(w), \\
W(G_1) &= W(G') + t_{G_1}(w),
\end{align*}
because the distances in $G'$ do not change when we add the vertex $w$. Now we calculate the transmission of $w$ in $G$ and $G_1$.
\begin{align*}
 t_{G}(w) &= 2 + {i+2 \choose 2} + a(i+2) + {j+2 \choose 2} + b(j+2), \\
 t_{G_1}(w) &= 3 + 2a + {i+j+2 \choose 2} +  b(i+j+2).
\end{align*}
Since we assume $a \leq b$ and $i,j \geq 2$, we get
$$ t_{G_1}(w) -  t_{G}(w) \geq 1 + i(b-a) > 0$$
and hence $W(G_1) > W(G)$, a contradiction.

\emph{Case 2. }
We assume that $i$ is odd. We will proceed in a similar way as in the first case. Let $G_2$ be a graph obtained from $G'$ by joining a vertex $w$ to $v_{i-1}$ and to vertex $v_{i-3}$.
Note that in this case we may assume that $i \ge 3$ as the situation for $i = 1$ is clear. Thus, $v_{i-3}$ exists.

$W(G)$ and $t_G(w)$ are the same as in the previous case and by a similar argument we get
\begin{align*}
W(G_2) &= W(G') + t_{G_2}(w) \\
t_{G_2}(w) &= 5 + 3a + {i+j+1 \choose 2} + b(i+j+1).
\end{align*}
By an easy calculation we get $t_{G_2}(w) - t_{G}(w) > 0$ which implies $W(G_2) > W(G)$, a contradiction.
\end{proof}

\begin{lemma} \label{lem:onions}
Let $G$ be an extremal graph. At least one of the trees (brooms) attached to $C_4$ in $G$ has the height at most one. In other words, $G$ is an onion graph.
\end{lemma}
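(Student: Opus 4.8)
Recall that $p\le q$. By the preceding lemmata, $G$ is a $4$-cycle $v_0wu_0w'$ in which $w,w'$ have degree $2$ and which carries a broom $B$ rooted at $v_0$ and a broom $B'$ rooted at $u_0$; if either broom is trivial the claim is immediate, so assume both are nontrivial, and by Lemma~\ref{lem:vyska_max_2} assume $B$ has height at most $2$. Suppose, towards a contradiction, that neither broom is a star, i.e.\ that both have height at least $2$. Then $B$ has height exactly $2$, so $B$ is a vertex $c$ adjacent to $v_0$ together with $s\ge1$ pendant neighbours of $c$ (for $s=1$ this is $P_3$). Write $B'$ as a path $u_0u_1\cdots u_m$ bearing $r\ge0$ pendants at $u_m$; the height hypothesis forces $m\ge1$, and $m\ge2$ when $r=0$. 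If $p<q$ then, by the lemma placing all pendants in $Q$, one pins down the relevant parities: $c\in P$, hence $v_0,u_0\in Q$, hence $u_m\in P$ and $m$ odd when $r\ge1$, while $m$ is even when $r=0$.

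The plan is to produce a graph $G^\star$ with the same $(p,q)$-partition and $W(G^\star)>W(G)$, contradicting extremality. There are three elementary moves I would use: (a) detach the whole broom $B$ and re-attach it, rooted at $v_0$, at a vertex $z$ of $G\setminus(V(B)\setminus\{v_0\})$ in the same part as $v_0$; (b) detach the star $K_{1,s}$ hanging at $c$ and re-attach its centre at a vertex $z$ in the same part as $c$; (c) move a single pendant from one of the two clusters (the $s$ pendants at $c$, the $r$ pendants at $u_m$) to the other. Moves (a) and (b) are licensed by Lemma~\ref{lem:du} once one checks the relevant transmission inequality ($t(z)>t(v_0)$, respectively $t(z)>t(c)$) in the graph obtained after detaching; move (c) respects the partition because the two cluster ends lie in the same part of $G$ (when $p<q$, both in $Q$), and its effect on $W$ is computed directly via Lemma~\ref{lem:polansky}. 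Short transmission computations then show: if $r=0$, pushing $B$ onto the path-end $u_m$ strictly increases $W$; if $r=1$, re-attaching $B$ to a pendant of $u_m$ does; and if $r\ge2$ and the two clusters differ sufficiently in size, transferring one pendant from the larger to the smaller cluster strictly increases $W$.

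The remaining configuration — $r\ge2$ with the two clusters of (nearly) equal size — is the main obstacle, because there no single move of type (a)–(c) helps (for the fully symmetric double-cluster all three strictly \emph{lower} $W$), so a more global surgery is needed. Here I would straighten $B$ onto $v_0$ (turning it into a star there) while simultaneously lengthening the path of $B'$ with the vertices thus freed, together with one compensating pendant transfer to keep the $(p,q)$-partition; one then verifies $W(G^\star)>W(G)$ by an explicit computation of the two Wiener indices, along the lines of the computations in the proofs of Lemma~\ref{lem:antipodal} and Lemma~\ref{lem:vyska_max_2}. The case $p=q$ is handled analogously. In every case the extremality of $G$ is contradicted, so at least one broom of $G$ has height at most $1$, i.e.\ $G$ is an onion graph.
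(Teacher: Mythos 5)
Your overall strategy (reduce to a height-$2$ broom $B$ via Lemma~\ref{lem:vyska_max_2}, then find a partition-preserving move that increases $W$) is the right one, and your easy cases ($r=0$, $r=1$) do check out via Lemma~\ref{lem:du}. But the proposal has a genuine gap exactly where you locate the difficulty. For $r\ge 2$ you assert, without computation, that a single pendant transfer works when the clusters are sufficiently unbalanced, and for the nearly balanced case you only name a ``global surgery'' and defer its verification (``one then verifies $W(G^\star)>W(G)$ by an explicit computation''). That verification is the entire content of the lemma in its hardest case, so nothing is actually proved there. Worse, the surgery as described does not preserve the bipartition: turning $B$ into a star at $v_0$ moves the $s$ pendants of $c$ from distance $2$ to distance $1$ from $v_0$, i.e.\ flips $s$ vertices between the parts, and ``one compensating pendant transfer'' can only repair a discrepancy of one vertex per part. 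So for $s\ge 2$ the proposed $G^\star$ does not lie in the class being optimized over, and the contradiction cannot be drawn.

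For contrast, the paper avoids any case analysis on the second broom $B'$ altogether. It detaches $B'$ as a black box $H_1$, replaces the remaining piece $G_1$ ($C_4$ plus the height-$2$ broom with $a$ pendants) by a same-size, same-partition graph $G_2$ ($C_4$ with $a-2$ pendants on one degree-two cycle vertex and a $P_4$ on the antipodal one), reattaches $H_1$ at the far end of that path, and compares via Lemma~\ref{lem:polansky}: the inequality reduces to $W(G_2)-W(G_1)+(|V(H_1)|-1)(t_{G_2}-t_{G_1})\ge 10a-16>0$ for $a\ge 2$, with a separate two-line transmission check for $a=1$. If you want to salvage your route, you would need to either carry out the explicit Wiener-index comparison for your balanced-cluster surgery (after fixing the partition bookkeeping), or adopt a uniform transformation of the $B$-side in the spirit of the paper's Case~2.
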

\begin{proof}
We denote two antipodal vertices of degree bigger than two on $C_4$ by $u$ and $v$. We also denote the broom attached to $u$ by $B_u$ and the broom attached to $v$ by $B_v$.
It follows from Lemma~\ref{lem:vyska_max_2} that at least one broom has the height at most 2, without loss of generality it is $B_u$. If the height of $B_u$ is equal to one we are done. We assume that height of $B_u$ is two. Let $a$ be the number of pendant vertices in $B_u$. We distinguish two cases.

\emph{Case 1. } We assume that $a=1$. We define two graphs $G_1:= G \setminus (B_v-v)$ and
$H_1:= B_v$.  We denote by $y$ the only vertex of degree one in $G_1$.
Note that $G$ can be obtained from $G_1$ and $H_1$ by identifying
$v \in V(G_1)$ with $v \in V(H_1)$. Let $G'$ be a graph obtained from $G_1$ and $H_1$ by identifying
$y \in V(G_1)$ with $v \in V(H_1)$. See Figure~\ref{fig:lemma_onions_2} for an illustration of this transformation. Observe that $t_{G_1}(v) = 11$ and $t_{G_1}(y) = 13$. By Lemma~\ref{lem:du} we have $W(G') > W(G)$, a contradiction.

\begin{figure}[h]
    \centering
    \includegraphics[scale = 0.75]{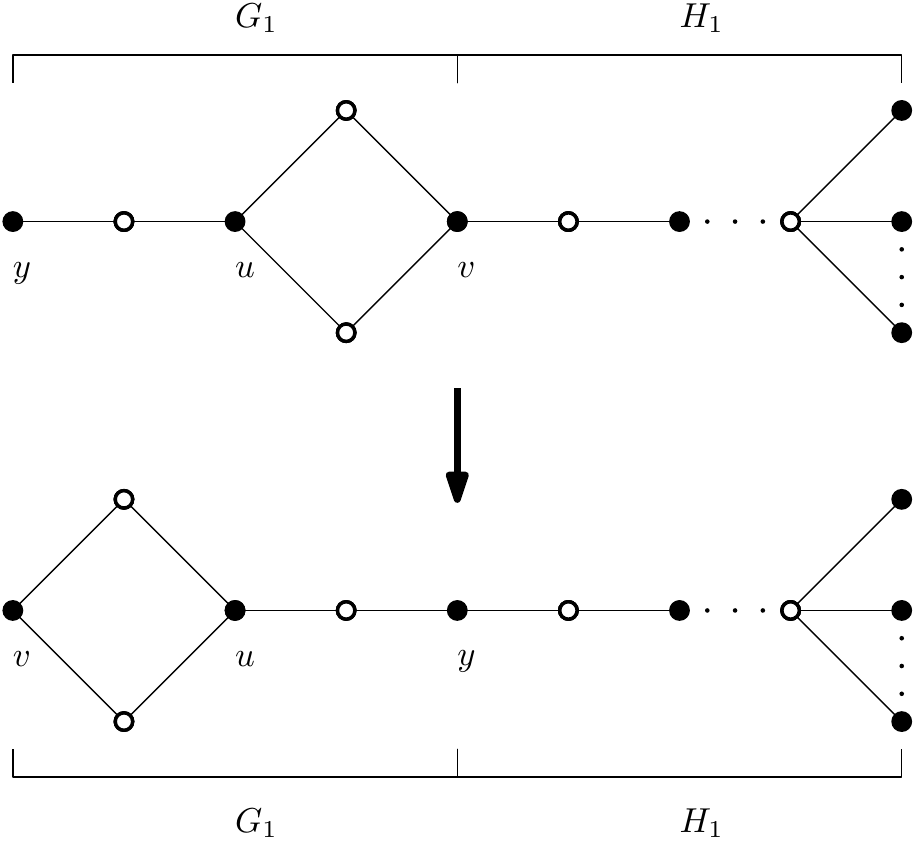}
    \caption{The transformation in the first case of Lemma~\ref{lem:onions}.}
    \label{fig:lemma_onions_2}
\end{figure}

\emph{Case 2. }
We assume that $a \geq 2$. Let $G_1$ and $H_1$ be defined as in the previous
case. We create a graph $G_2$ in the following way. Denote by $s$ and $t$ the
two antipodal vertices of degree two in the cycle in $G_1$. Attach to the
vertex $s$ exactly $a-2$ pendant vertices and attach to $t$ a path on four vertices $p_1, p_2, p_3,p_4$ with
endpoints $p_1$ and $p_4$ by identifying $p_1$ with $t$. We denote the resulting graph
by $G_2$. Note that $G_2$ has the same number of edges, the same number of
vertices and the same partition as $G_1$.

\begin{figure}[h]
    \centering
    \includegraphics[scale = 0.75]{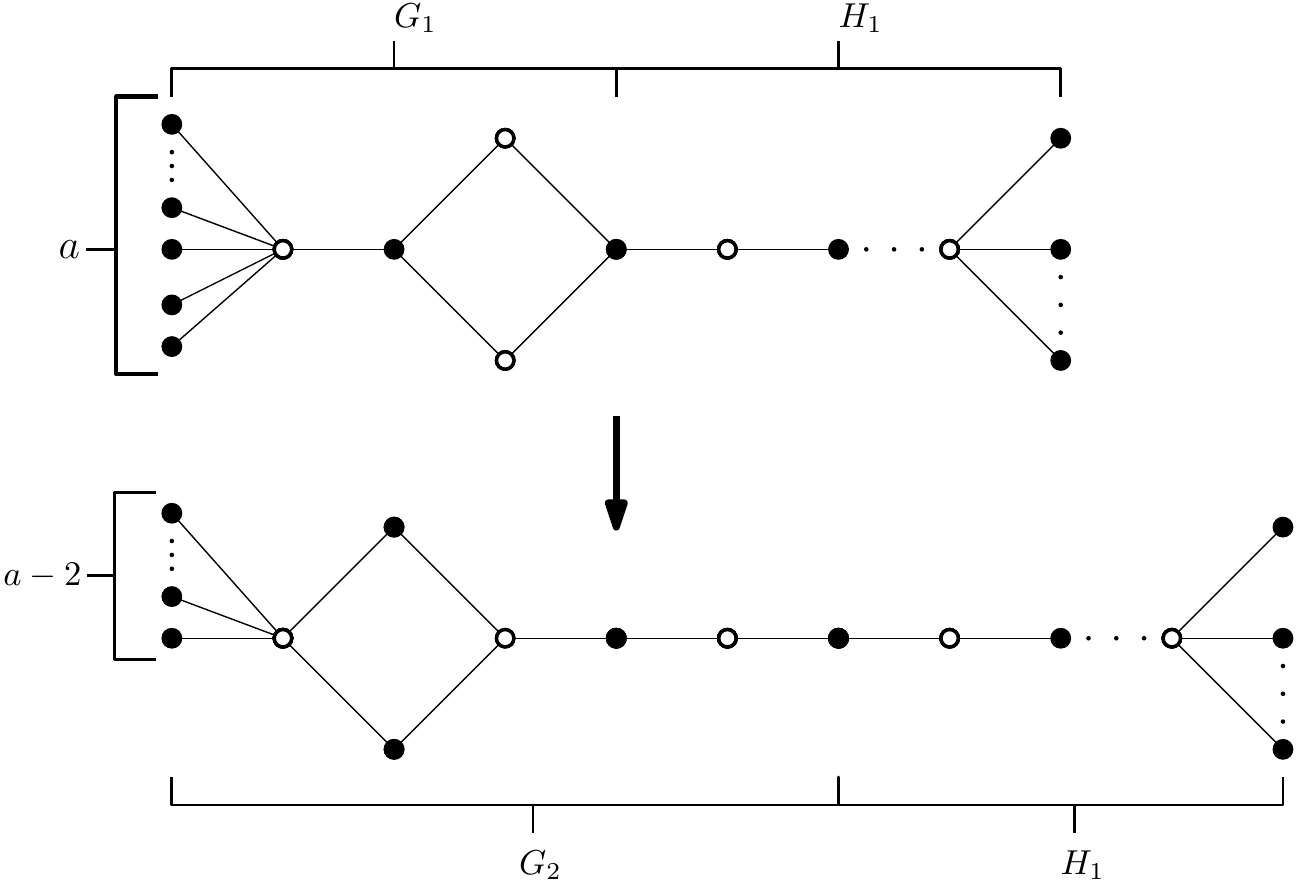}
    \caption{The transformation in the second case of Lemma~\ref{lem:onions}.}
    \label{fig:lemma_onions_3}
\end{figure}

Let $G'$ be a graph obtained from $G_2$ and $H_1$ by identifying $p_4 \in V(G_2)$ with $v \in V(H_1)$. See Figure~\ref{fig:lemma_onions_3} for an illustration.  By Lemma~\ref{lem:polansky} we get that for Wiener indices of $G$ and $G'$ it holds that
\begin{align*}
W(G) &= W(G_1) + W(H_1) + (|V(H_1)| - 1)t_{G_1}(v) + (|V(G_1)| - 1)t_{H_1}(v), \\
W(G') &= W(G_2) + W(H_1) + (|V(H_1)| - 1)t_{G_2}(v) + (|V(G_1)| - 1)t_{H_1}(v).
\end{align*}
We need to prove that
$$ W(G_2) - W(G_1) + (|V(H_1)|-1)\cdot (t_{G_2}(v) - t_{G_1}(v)) > 0.$$

First, we observe that for transmissions of $v$ it holds that
\begin{align*}
 t_{G_1}(v) &= 4a+7, \\
 t_{G_2}(v) &= 6a+7,
\end{align*}
which implies $(t_{G_2}(v) - t_{G_1}(v)) = 2a$.
Now we shall determine Wiener indices of $G_1$ and $G_2$.
Note that there are exactly $5+a$ edges in $G_1$, exactly $a + 4 + {a \choose 2}$
pairs of vertices in distance two, exactly $2a+1$ vertices in distance 3 in $G_1$, $a$ vertices in distance 4
and there is no pair of vertices in distance bigger than 4 in $G$.  Hence we have
\begin{align*}
W(G_1) &=  (5 + a) + 2\Big[(a+4) + {a \choose 2}\Big] + 3(2a+1) + 4a\\ &= 2{a \choose 2} + 13a + 16.
\end{align*}
Similarly, we get
\begin{align*}
W(G_2) &= 5 + a + 2(2a-2) + 2{a-2 \choose 2}\\ &+ 3(a+2) + 4(a+1)+ 5(a-1)+ 6(a-2) \\
&=  2{a-2 \choose 2} + 23a -6.
\end{align*}
Then
$$ W(G_2) - W(G_1) = 23a - 6 - 13a - 16 - 4a + 6 = 6a - 16$$
and finally for $a \geq 2$, we get
\begin{align*}
   (W(G_2) - W(G_1)) + &(|V(H_1)|-1)\cdot (t_{G_2}(v) - t_{G_1}(v)) \\&\geq  6a-16 + (|V(H_1)|-1)2a \ge 10a - 16 > 0. 
\end{align*}
This completes the proof.
\end{proof}

\begin{lemma} \label{lem:hodnota}
The Wiener index of the onion graph $\On(k,l,m)$ is equal to
$$ k^2 + 7k + 8 + \frac{l^3-l}{6} + m^2 + m \frac{l^2 + l - 2}{2}
+ (k+3)(\frac{l^2-l}{2} + ml) + (l+m-1)(3k+4).$$
\end{lemma}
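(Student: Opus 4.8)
The plan is to compute $W(\On(k,l,m))$ by decomposing the onion graph into two pieces glued at a single vertex and applying Lemma~\ref{lem:polansky}. Recall that $\On(k,l,m)$ consists of a $4$-cycle with antipodal vertices $u,v$, with $k$ pendant edges at $v$ and a broom (path $P_l$ ending in $m$ pendant vertices) attached at $u$. The natural cut is at the vertex $u$: let $G_u$ be the ``cycle part'' — the $C_4$ on vertices $u,w,v,w'$ together with the $k$ pendant edges at $v$ — and let $G_v$ be the broom $B$ with root $u$, namely $P_l = u = p_1, p_2, \dots, p_l$ with $m$ pendant vertices at $p_l$. Then $\On(k,l,m)$ is exactly $G_u$ and $G_v$ identified along $u$, so by Lemma~\ref{lem:polansky},
\begin{align*}
W(\On(k,l,m)) = W(G_u) + W(B) + (|G_u|-1)\, t_B(u) + (|B|-1)\, t_{G_u}(u),
\end{align*}
with $|G_u| = k+4$ and $|B| = l+m$.

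Next I would evaluate each of the four terms by elementary distance counting. For the broom $B$: $t_B(u) = \sum_{s=1}^{l-1} s + m\cdot l = \frac{l^2-l}{2} + ml$, which is the bracket multiplied by $(k+3) = |G_u|-1$ above. For $W(B)$: split pairs into path-path pairs, path-leaf pairs, and leaf-leaf pairs; path-path contributes $\binom{l+1}{3}$-type sums giving $\frac{l^3-l}{6}$ after simplification, the $m$ leaves each sit at distance one beyond $p_l$ contributing $m\cdot\frac{l^2+l-2}{2}$ (distances from each leaf to the $l$ path vertices), and the $\binom{m}{2}$ leaf-leaf pairs are each at distance $2$, contributing $m^2 - m$; collecting these reproduces $\frac{l^3-l}{6} + m^2 + m\frac{l^2+l-2}{2} + m\,l$ — care is needed to track exactly where the ``$+ml$'' lands. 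For $t_{G_u}(u)$: in the $C_4$ with a $k$-star at the antipodal vertex $v$, $u$ is at distance $1$ from $w,w'$, distance $2$ from $v$, and distance $3$ from each of the $k$ pendants, so $t_{G_u}(u) = 1+1+2+3k = 3k+4$, which is the factor multiplying $(|B|-1) = l+m-1$. For $W(G_u)$: this graph has $k+4$ vertices; counting pairs by distance (the $C_4$ itself contributes $4\cdot 1 + 2\cdot 2 = 8$; each pendant to $v$ at distance $1$, to $w,w'$ at distance $2$, to $u$ at distance $3$, so $1+2\cdot2+3 = 8$ per pendant; pendant-pendant pairs at distance $2$ contribute $2\binom{k}{2} = k^2-k$) gives $W(G_u) = 8 + 8k + k^2 - k = k^2 + 7k + 8$.

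Assembling the four contributions: $W(G_u) = k^2+7k+8$, $(|B|-1)t_{G_u}(u) = (l+m-1)(3k+4)$, $(|G_u|-1)t_B(u) = (k+3)\bigl(\frac{l^2-l}{2}+ml\bigr)$, and $W(B) = \frac{l^3-l}{6} + m^2 + m\frac{l^2+l-2}{2}$ — and summing these is precisely the claimed formula, so the proof is just substitution into Lemma~\ref{lem:polansky} plus verification that the sum matches term by term. I expect the main obstacle to be purely bookkeeping: getting the closed forms for $W(B)$ and $W(G_u)$ right, particularly the path-internal sum $\frac{l^3-l}{6}$ (easiest to derive as $\sum_{1\le a<b\le l}(b-a) = \binom{l+1}{3}$ and then simplify) and correctly attributing the mixed ``$ml$'' term, since it arises partly from leaf-to-path distances and would be easy to double-count or drop. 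Once those two sub-identities are pinned down, everything else is immediate from Lemma~\ref{lem:polansky}, and one can sanity-check against small cases (e.g. $l=1$, where $\On(k,1,m)$ degenerates and the formula should reduce to the Wiener index of a $C_4$ with a $(k)$-star and an $(m)$-star at antipodal vertices).
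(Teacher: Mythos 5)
Your proposal is correct and follows essentially the same route as the paper: both decompose $\On(k,l,m)$ at $u$ into the pendant-decorated $C_4$ and the broom and apply Lemma~\ref{lem:polansky} with $t_{G_u}(u)=3k+4$ and $t_B(u)=\frac{l^2-l}{2}+ml$; the paper merely evaluates $W(G_u)$ and $W(B)$ by further applications of that same lemma, while you count distances directly, with identical results. (Your final assembly correctly uses $W(B)=\frac{l^3-l}{6}+m^2+m\,\frac{l^2+l-2}{2}$; the stray ``$+ml$'' in your intermediate tally of $W(B)$ should indeed be dropped, as you yourself flagged --- that term belongs to $(k+3)\,t_B(u)$, not to $W(B)$.)
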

\begin{proof}
We will compute the Wiener index of particular subgraphs and repeatedly use Lemma~\ref{lem:polansky}.

Let $u$ and $v$ be two antipodal vertices in $C_4$ and let $v'$ be the vertex of degree $k$ in $K_{1,k}$. We denote by $G_1$ a graph obtained from $K_{1,k}$ and $C_4$ by identifying $v' \in K_{1,k}$ with $v \in C_4$.
Note that $W(K_{1,k}) = k^2$. By Lemma~\ref{lem:polansky} we get
$$W(G_1) = k^2 + 8 + (k+1-1)4 + 3k = k^2 + 7k + 8.$$
Let $G_2$ be a broom with path of length $l$ and $m$ pendant vertices. We denote by $u_1$ and $u_l$ endpoints of $P_l$ in such way that pendant vertices are attached to $u_l$. Observe that $W(P_l) = {l + 1 \choose 3}$ and $W(K_{1,m}) = m^2$. For Wiener index of $G_2$ it holds by Lemma~\ref{lem:polansky} that
$$W(G_2) =  \frac{l^3-l}{6} + m^2 + (l-1)m + m \frac{l^2-l}{2}.$$
Note that we can get $G$ from $G_1$ and $G_2$ by identifying $u \in G_1$ with $u_1 \in G_2$. Note that
$|V(G_1)| = k+4$ and $|V(G_2)| = l+m$. For transmissions of $u$ and $u_1$, we have
\begin{align*}
 t_{G_1}(u) &= 3k+4, \\
 t_{G_2}(u_1) &= \frac{l^2-l}{2} + ml.
\end{align*}
Again by Lemma~\ref{lem:polansky} we get that Wiener index of $G$ is equal to
$$ k^2 + 7k + 8 + \frac{l^3-l}{6} + m^2 + m \frac{l^2 + l - 2}{2}
+ (k+3)(\frac{l^2-l}{2} + ml) + (l+m-1)(3k+4).$$
\end{proof}

The following lemma can be proved by an easy computation, so we skip the details.

\begin{lemma} \label{lem:transmise}
Let $G$ be an onion graph $\On(k,l,m)$. Let $v$ vertex on the cycle with $k$ pendant vertices and $u_l$ be the last vertex on the path of length $l$ with $m$ pendant vertices. Then
\begin{align*}
 t_{G}(v) &= k + 1 + {l+2\choose2} + m(l+2), \\
 t_{G}(u_l) &= m + {l+2\choose2} + l + k(l+2).
\end{align*}
\end{lemma}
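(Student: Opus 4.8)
The plan is to obtain both transmissions by a direct distance count, partitioning the vertex set of $\On(k,l,m)$ into natural blocks and summing the distance from the base vertex to each block. First I would fix notation: let $w,w'$ be the two vertices of the $4$-cycle other than the antipodal pair $u,v$ (so each of $w,w'$ is adjacent to both $u$ and $v$); let $u=u_1,u_2,\dots,u_l$ be the vertices of $P_l$; let $p_1,\dots,p_k$ be the pendant vertices at $v$; and let $q_1,\dots,q_m$ be the pendant vertices at $u_l$. Then $V(\On(k,l,m))$ is the disjoint union of $\{v\}$, $\{w,w'\}$, $\{p_1,\dots,p_k\}$, $\{u_1,\dots,u_l\}$ and $\{q_1,\dots,q_m\}$ (when $l=1$ one has $u_l=u_1=u$, and I would simply verify that the formulas still hold in this degenerate case at the end).

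For $t_G(v)$ I would record: $\dist_G(v,w)=\dist_G(v,w')=1$ and $\dist_G(v,p_i)=1$, contributing $k+2$; $\dist_G(v,u_j)=j+1$ for $j=1,\dots,l$ (two steps around the $4$-cycle to reach $u=u_1$, then along the path), contributing $\sum_{j=1}^{l}(j+1)={l+2\choose2}-1$; and $\dist_G(v,q_i)=(l+1)+1=l+2$, contributing $m(l+2)$. Summing gives $t_G(v)=(k+2)+\big({l+2\choose2}-1\big)+m(l+2)=k+1+{l+2\choose2}+m(l+2)$. Analogously, for $t_G(u_l)$: $\dist_G(u_l,q_i)=1$, contributing $m$; $\dist_G(u_l,u_{l-r})=r$ for $r=1,\dots,l-1$, contributing $\sum_{r=1}^{l-1}r={l\choose2}$; $\dist_G(u_l,w)=\dist_G(u_l,w')=l$ and $\dist_G(u_l,v)=l+1$, contributing $3l+1$; and $\dist_G(u_l,p_i)=(l+1)+1=l+2$, contributing $k(l+2)$. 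Using the identity ${l\choose2}+3l+1={l+2\choose2}+l$ then yields $t_G(u_l)=m+{l+2\choose2}+l+k(l+2)$.

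Since both quantities are explicit finite sums of arithmetic progressions, there is no genuine obstacle; the only points demanding a little care are the additive constants coming from the two cycle vertices $w,w'$ together with $v$ (resp.\ $u$), the correct endpoint of the path to which the $m$ pendants are attached, and the degenerate case $l=1$ (where the path block for $t_G(u_l)$ is empty, ${l\choose2}=0$, and one checks directly $\dist_G(u,w)=\dist_G(u,w')=1$ and $\dist_G(u,v)=2$, so the constant $3l+1=4$ is still correct). This bookkeeping is precisely why the statement is flagged as following from an easy computation.
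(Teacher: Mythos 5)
Your computation is correct (I verified each block's contribution, the identity $\binom{l}{2}+3l+1=\binom{l+2}{2}+l$, and the degenerate case $l=1$), and it is exactly the kind of direct distance count the paper has in mind: the paper omits the proof entirely, stating only that the lemma ``can be proved by an easy computation.'' Your write-up therefore supplies the missing details in the expected way, with appropriate care for the cycle vertices and the $l=1$ case.
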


\section{Proof of the main theorem}

We are now ready to prove Theorem~\ref{thm:main}.

\begin{proof}[of Theorem~\ref{thm:main}]
Let $G \in E_{p,q}$.
From the previous series of lemmata we get that $G$ is an onion graph with all pendant vertices are in part $Q$. Now we shall determine parameters of the onion for $G$.

Let $G$ be isomorphic to $\On(a,l,b)$ where $q-p = a+b$. Let $v$ be the  vertex on the cycle with $a$ pendant vertices and $u_l$ be the last vertex on the path of length $l$ with $b$ pendant vertices.
Since $q-p=a+b$, we get that $a = \lfloor\frac{q-p}{2}\rfloor$ implies $b = \lceil\frac{q+p}{2}\rceil$.
Further note that $\On(a,1,b) \cong \On(b,1,a)$ and thus for $l=1$, it might happen that $a = b+1$.

Suppose for a contradiction that $a \neq \lfloor \frac{q-p}{2} \rfloor$. By the above
reasoning there are only two cases to distinguish:
\begin{itemize}
    \item $a \ge b + 1$ and $l \ge 2$, or
    \item $b \ge a + 2$.
\end{itemize}

\emph{Case 1. } We assume that $a \geq b+1$ and $l \geq 2$.
We define two graphs $G_1 := \On(a-1,l,b)$ and $H_1:= K_2$.
Clearly, $G$ can be obtained from $G_1$ and $H_1$ by identifying $v \in G_1$ with one of the vertices of $H_1$.
Let us denote by $G'$ a graph obtained from $G_1$ and $H_1$ by identifying $u_l \in G_1$ with one of the vertices of $H_1$. We use Lemma~\ref{lem:transmise}  for parameters $k = a-1, l$ and $m= b$ and we get that
\begin{align*}
 t_{G_1}(v) &= a + {l+2\choose2} + b(l+2), \\
 t_{G_1}(u_l) &= b + {l+2\choose2} + l + (a-1)(l+2),\\
 t_{G_1}(u_l) - t_{G_1}(v) &= l(a-b) + (a-b) - 2 > 0.
\end{align*}
From Lemma~\ref{lem:du} we conclude that $W(G') > W(G)$, a contradiction.

\emph{Case 2. } We assume that $b \geq a+2$. Analogously to the previous case we define $G_2 := \On(a,l,b-1)$  and $H_2:= K_2$. Observe that $G$ can be obtained from $G_2$ and $H_2$ by identifying $u_l \in G_2$ with one of the vertices of $H_2$. We define $G''$ as a graph obtained from $G_2$ and $H_2$ by identifying $v \in G_2$ with one of the vertices of $H_2$. Again we use Lemma~\ref{lem:transmise}
for parameters $k = a, l$ and $m= b-1$ to compute the transmissions:
\begin{align*}
 t_{G_2}(v) &= a + 1 + {l+2\choose2} + (b-1)(l+2), \\
 t_{G_2}(u_l) &= b -1 + {l+2\choose2} + l + a(l+2),\\
 t_{G_2}(v) - t_{G_2}(u_l) &= l(b-a-2) + b-a> 0.
\end{align*}
By Lemma~\ref{lem:du} we get $W(G') > W(G)$, a contradiction.

Therefore, $a = \lfloor (q-p)/2 \rfloor$, $b=\lceil (q-p)/2 \rceil$ and
$$l = p+q-a-b-3 = p +q - (q-p) - 3 = 2p - 3.$$ From this we conclude that  $$ G \cong \On(\lfloor (q-p)/2 \rfloor, 2p-3, \lceil (q-p)/2
\rceil).$$ 
Using Lemma~\ref{lem:hodnota} we get the exact value of its
Wiener index which is equal to
\begin{align*}
&(2p-5)\cdot\lceil(q-p)/2\rceil\lfloor(q-p)/2\rfloor + (p-7)\lceil(q-p)/2\rceil + (13-7p)\lfloor(q-p)/2\rfloor + \\
&2p^2q + (q-p)^2 + 2p^3 - 37p + 66.
\end{align*}
This finishes the proof.
\end{proof}

\section{Conclusion and future work}

We obtained the extremal graphs and values for the maximum Wiener index on
unicyclic graphs with given bipartition. A natural next question is to
consider the class of cacti. A cactus graph is a graph where every edge
belongs to at most one cycle. Unicyclic graphs are precisely the cacti with
one cycle.

\begin{problem}
    What is the maximum Wiener index and the extremal graphs attaining such index
    for bipartite cacti graphs with given size of parts and number of cycles?
\end{problem}

\section*{Acknowledgments}

The first and the second author would like to acknowledge the support of the
grant SVV-2017-260452. The second author was
supported by Student Faculty Grant of Faculty of Mathematics and Physics,
Charles University.

\bibliographystyle{plain}
\bibliography{wiener_unicyclic}

\begin{thebibliography}{10}

\bibitem{bollobas2004extremal}
B.~Bollob{\'a}s.
\newblock {\em Extremal graph theory}.
\newblock Courier Corporation, 2004.

\bibitem{bonchev2002wiener}
D.~Bonchev.
\newblock The {W}iener number--some applications and new developments.
\newblock In {\em Topology in Chemistry}, pages 58--88. Elsevier, 2002.

\bibitem{dobrynin2001wiener}
A.~Dobrynin, R.~Entringer, and I.~Gutman.
\newblock Wiener index of trees: theory and applications.
\newblock {\em Acta Applicandae Mathematica}, 66(3):211--249, 2001.

\bibitem{dong2012maximum}
H.~Dong and B.~Zhou.
\newblock Maximum {W}iener index of unicyclic graphs with fixed maximum degree.
\newblock {\em Ars Combinatorica}, 103:407--416, 2012.

\bibitem{Du2012}
Z.~Du.
\newblock {Wiener indices of trees and monocyclic graphs with given
  bipartition}.
\newblock {\em International Journal of Quantum Chemistry}, 112:1598--1605,
  2012.

\bibitem{entringer1976distance}
R.~C. Entringer, D.~E. Jackson, and D.~A. Snyder.
\newblock Distance in graphs.
\newblock {\em Czechoslovak Mathematical Journal}, 26(2):283--296, 1976.

\bibitem{harary1959status}
F.~Harary.
\newblock Status and contrastatus.
\newblock {\em Sociometry}, 22(1):23--43, 1959.

\bibitem{hou2012maximum}
H.~Hou, B.~Liu, and Y.~Huang.
\newblock The maximum {W}iener polarity index of unicyclic graphs.
\newblock {\em Applied Mathematics and Computation}, 218(20):10149--10157,
  2012.

\bibitem{knor2014wiener}
M.~Knor and R.~{\v{S}}krekovski.
\newblock Wiener index of line graphs.
\newblock {\em Quantitative Graph Theory: Mathematical Foundations and
  Applications}, pages 279--301, 2014.

\bibitem{Knor2016}
M.~Knor, R.~{\v{S}}krekovski, and A.~Tepeh.
\newblock {Mathematical aspects of {W}iener index}.
\newblock {\em Ars Mathematica Contemporanea}, 11:327--352, 2016.

\bibitem{liu2011wiener}
M.~Liu and B.~Liu.
\newblock On the {W}iener polarity index.
\newblock {\em MATCH Commun. Math. Comput. Chem}, 66(1):293--304, 2011.

\bibitem{polansky1986}
O.~E. Polansky and D.~Bonchev.
\newblock The {W}iener number of graphs. i. general theory and changes due to
  some graph operations.
\newblock {\em MATCH Commun. Math. Comput. Chem}, 21(133-186):72, 1986.

\bibitem{vsoltes1991transmission}
L.~{\v{S}}olt{\'e}s.
\newblock Transmission in graphs: a bound and vertex removing.
\newblock {\em Mathematica Slovaca}, 41(1):11--16, 1991.

\bibitem{wiener}
H.~Wiener.
\newblock Structural determination of paraffin boiling points.
\newblock {\em Journal of the American Chemical Society}, 69(1):17--20, 1947.

\bibitem{Xu2014461}
K.~Xu, M.~Liu, K.~Ch.~Das, I.~Gutman, and B.~Furtula.
\newblock A survey on graphs extremal with respect to distance-based
  topological indices.
\newblock {\em MATCH Commun. Math. Comput. Chem}, 71(3):461--508, 2014.

\end{thebibliography}

\end{document}